\newtheorem{Theorem}{Theorem}[section]
\newtheorem{Proposition}[Theorem]{Proposition}
\newtheorem{Lemma}[Theorem]{Lemma}
\newtheorem{Assumption}[Theorem]{Assumption}
\newtheorem{Definition}[Theorem]{Definition}
\newtheorem{Remark}[Theorem]{Remark}
\numberwithin{equation}{section}
\newcommand{\Uad}{\mathcal{U}}
\newcommand{\dx}{\,\mathrm{d}x}
\title{Finite element error analysis of affine
optimal control problems\thanks{This work was supported by the FWF grants P-31400-N32 and I4571-N.}}
\author{Nicolai Jork\thanks{Institute of Statistics and Mathematical Methods in Economics, Vienna University of Technology, Vienna, Austria.}}
\date{}
\begin{document}

\maketitle
%
\begin{abstract}
This paper is concerned with error estimates for the numerical approximation for affine optimal control problems subject to semilinear elliptic PDEs. To investigate the error estimates, we focus on local minimizers that satisfy certain local growth conditions. The local growth conditions we consider in this paper appeared recently in the context of solution stability and contain the joint growth of the first and second variation of the objective functional.
These growth conditions are especially meaningful for affine control constrained optimal control problems because the first variation can satisfy a local growth, which is not the case for unconstrained problems. The main results of this paper are the achievement of error estimates for the numerical approximations generated by a finite element scheme with piecewise constant controls or a variational discretization scheme. 
Even though the growth conditions considered are weaker than those appearing in the recent literature on finite element error estimates for affine problems, this paper substantially improves the existing error estimates for both the optimal controls and the states when a Hölder-type growth is assumed.
\end{abstract}
\section{Introduction}
Affine optimal control problems, by which we mean problems where the controls appear at most in an affine way in the objective functional and the constraining equation, are a relatively recent subject of study, especially when PDE constraints are considered. For the analysis of affine optimal control problems subject to ODE constraints, we refer to the papers \cite{DV2021,F2003,MO2004, MO1998,   OM2005, OM2007, OV2020, PSV2019, PSV2018,QV2013} which contain results related to sufficient second-order conditions and the metric regularity and stability of the optimal control problems, especially for those with bang-bang structure of the optimal controls. An application of the regularity and stability investigations is for error estimates for the discretized problem, for instance, the Euler discretization, which can be found in \cite{OV2020}.
For PDE-constrained problems, the earliest work related to affine problems is, to the best knowledge of the author, the work \cite{Casas2012}, which was extended to problems with different constraining PDEs or objective functionals, for instance, in \cite{B2014,CT2014,CT2016,CWW2017,CRT2015,CMR2019}. Recently, the study of affine PDE-constrained optimal control problems was done in the works \cite{CDJ2023, DJV2022b,DJV2022} under assumptions resembling partially the assumptions that appeared in the context of ODE-optimal control in \cite{OV2020}.
A typical example of an affine problem is the tracking-type objective functional, which is a common type of objective functional used in many applied situations, including engineering, finance, and, more recently, machine learning.
Often, a so-called Tikhonov regularization term, a quadratic term with respect to the controls, is added to the objective functional. This is mainly for two reasons. First, in some situations, it is of interest to penalize the control cost. Second, incorporating the Tikhonov regularization term has some significant implications for analyzing the control problem. One of which is that by adding such a term, under mild additional assumptions, a quadratic growth of the second variation of the objective functional can be guaranteed, making the problem coercive. This then has many implications in the analysis of the problem, such as the study of error estimates for the numerical approximation. On the other hand, this comes with the price that adding such a term represents a distortion of the original problem; the optimal controls and states of the regularized problem can have substantially different structures. For instance, the bang-bang property of optimal controls can be expected in affine problems but not in regularized ones. To compensate for the missing coercivity due to the absence of a Tikhonov regularization term, the analysis of affine (unregularized) optimal control problems, in general, builds on certain assumptions related to the growth of the objective functional at local minimizers. In this paper, to study error estimates for the numerical approximation, we rely on assumptions that were recently studied in the context of strong metric subregularity and solution stability. These growth conditions encompass the local joint growth of the first and second variations and are weaker than the growth of the objective functional satisfied by Tikhonov regularized problems and the usual assumptions made in the study of affine problems. Using these assumptions, which we will specify below, we consider error estimates for the numerical approximation generated by a finite element scheme with piecewise constant controls and a variational discretization scheme. The analysis is motivated by the works \cite{CM2021,  CWW2018, DH2012,CMR2019} on which the results of this paper build. Still, in comparison to the results therein, the error estimates for the optimal controls and states are under the assumptions introduced in \cite{CDJ2023, DJV2022}, which are weaker than the ones assumed in  \cite{CM2021, CWW2018, DH2012,CMR2019}. For a detailed comparison in case of a parabolic constraining PDE, we refer to \cite{CM2020, DJV2022b}, for the convenience of the reader, we provide a short discussion at the end of Section \ref{CProblem}. Among others, we utilize the following assumption: given a reference optimal control $\bar u$ and a number $\gamma \in (0,1]$, there exist positive constants $\alpha$ and $c$ such that
\begin{equation}\label{cond1}
    J(u)-J(\bar u)\geq c\|u-\bar u\|_{L^1(\Omega)}^{1+\frac{1}{\gamma}} \text{ for all feasible controls } u \text{ with } \|u-\bar u\|_{L^1(\Omega)}<\alpha.
\end{equation}
Conditions of the type \eqref{cond1} arise naturally in characterizing strict bang-bang optimal controls. They also appear due to sufficient second-order optimality conditions and the structural assumption on the adjoint state, see \cite{CWW2018}. A slightly stronger assumption that implies \eqref{cond1} was first considered in \cite{OV2020} for affine ODE optimal control problems and \cite{DJV2022} for PDE optimal control problems. Recently, \eqref{cond1} appeared in \cite{M2022,M2020} in the context of eigenvalue optimization problems. There, it was shown that for a certain type of eigenvalue optimization problem, condition \eqref{cond1} is implied by a growth of the second-order shape derivatives.

In this paper, we do not explicitly consider a sparsity-promoting term in the objective functional as it is done, for instance, in \cite{CM2021}. Still, the proofs can be easily adapted to include such a term following the arguments in \cite{CM2021}. Also, we expect the approach presented in the paper to apply to the situation of having a semilinear elliptic non-monotone and non-coercive state equation as in \cite{CMR2020} using the results of \cite{CMR2021b}.

To the author's best knowledge, the assumptions considered in this paper are the weakest so far that still allow error estimates for the numerical approximation for problems where the control appears at most in an affine way in the objective functional, and we expect the approach discussed in this paper to be easily adaptable to optimal control problems constrained by various other PDEs. It may also be feasible to achieve error estimates for the numerical approximation for a $2$-dimensional Neumann boundary control problem. But we postpone the analysis to future work.
\bigskip

Let us list the novelties in the paper. 
In Proposition \ref{mixedbangbang}, we answer a question raised in \cite{CDJ2023} on the structure of optimal controls satisfying one of the assumptions introduced in \cite{CDJ2023}. This result allows us to apply the assumption for the investigation of error estimates for the numerical approximation later on. Under conditions similar to the one introduced in \cite{CDJ2023} in the context of solution stability and conditions \eqref{cond1}, we derive error estimates for a finite element discretization scheme with piecewise constant controls in Theorem \ref{pointest}. In the first part of the proof of the main theorem, Theorem \ref{pointest}, we argue similarly as in the first steps of the proof of \cite[Theorem 7]{CM2021}. In contrast to the proof in \cite[Theorem 7]{CM2021}, we employ arguments centered around the linearized state. This allows us to improve the error estimates for the optimal controls for $\gamma\in (0,1)$, from $\|\bar u_h -\bar u\|_{L^1(\Omega_h)}\leq ch^{\gamma^2}$ to $\|\bar u_h -\bar u\|_{L^1(\Omega_h)}\leq ch^{\gamma}$ (and similarly for the states). Under additional assumptions on the structure of the level set of the switching function, which is given by the adjoint state for particular tracking type problems, we can further improve the error estimate from $\|\bar u_h -\bar u\|_{L^1(\Omega_h)}\leq ch^{\gamma}$ to $\|\bar u_h -\bar u\|_{L^1(\Omega_h)}\leq ch^{\frac{2\gamma}{\gamma+1}}$ in Theorem \ref{pointest2}.
Using the assumptions of \cite{DJV2022, CDJ2023}, we prove error estimates for a variational discretization scheme in Theorem \ref{vardiscest} and discuss a relationship with solution stability afterward.
\bigskip

The paper is structured as follows: In the remainder of this section, we state the main assumptions that hold throughout the paper and state some additional remarks on the notation. In Section \ref{Auxpde}, we collect results on the involved PDEs, and in Section \ref{CProblem}, the optimal control problem is discussed. In Section \ref{discmee}, we define the discretization schemes and prove error estimates.
\bigskip 

Let $\Omega\subset\mathbb R^n$, $n\in \{2,3\}$, be a bounded domain with $C^{1,1}$-boundary. {Given constants $u_a, u_b\in \mathbb R$ such that $ u_{a} < u_{b} $}, define the set of feasible controls by
\begin{equation}
		\Uad := \{u \in L^\infty(\Omega) \vert \ u_a \le u(x) \le u_b\ \text{ for a.a. } x \in \Omega\}
	\label{const}
\end{equation}
and consider the optimal control problem
	\begin{equation}
 		  \ \min_{u \in \Uad}\Big\{ J(u) := \int_\Omega L(x,y(x),u(x))\dx\Big\},
	\label{ocp1}
	\end{equation}
subject to
	\begin{equation}
		\left\{ \begin{array}{lll}
		{A}y+f(\cdot,y) =u\  &\text{ in }\ \Omega,\\
		y=0\ &\text{ on } \Gamma.\\
	\end{array} \right.
	\label{see1}
	\end{equation}

Denote by $y_u$ the unique solution of the state equation that corresponds to the control $u$. The objective integrand $L$ appearing in \eqref{ocp1} satisfies additional smoothness conditions, given below in Assumption \ref{A3}.\\

\subsection{Main assumptions and notation}

The following assumptions, close to those in \cite{CDJ2023,CM2020,CM2021,CWW2018},
are standing in all of the paper.

\begin{Assumption}\label{A1}
	The following statements are fulfilled.
	\begin{itemize}
		\item[(i)] The operator $ A :H^1_0(\Omega)\to H^{-1}(\Omega)$, is given by
		\[
			 A y:=-\sum_{i,j=1}^{n}\partial_{x_j}(a_{i,j}(x)\partial_{x_i} y),
		\]
where $a_{i,j}\in C^{0,1}(\bar \Omega)$. Further, the  $a_{i,j}$ satisfy the uniform ellipticity condition
		\[
			\exists \lambda_{A}>0:\ \lambda_{A}\vert\xi\vert^2\leq \sum_{i,j=1}^{n} a_{i,j}(x)\xi_{i}\xi_{j} \;\;
               \textrm{ for all } \xi\in \mathbb{R}^n \;\; 	 \textrm{and a.a.}\ x\in \Omega.
		\]
		
		\item[(ii)] \label{A2}
We assume that $f:\Omega \times \mathbb{R}\longrightarrow \mathbb{R}$ is a Carath\'eodory function of class $C^2$ with respect to the second variable satisfying:
\begin{align*}
\left\{\begin{array}{l}
f(\cdot,0)\in L^\infty(\Omega) \text{ and } \frac{\partial f}{\partial y}(x,y) \geq 0 \  \forall y \in \mathbb{R},
\vspace{1mm}\\
\displaystyle \forall M>0\ \exists C_{f,M}>0 \text{ s. t. }\left\vert\frac{\partial f}{\partial y}(x,y)\right\vert+
\left\vert\frac{\partial^2 f}{\partial y^2}(x,y)\right\vert \leq C_{f,M} \ \forall \vert y\vert \leq M,
\vspace{1mm}\\
\displaystyle \forall \rho > 0 \text{ and } \forall M>0\ \exists\ \varepsilon > 0 \text{ such that } \\\displaystyle
\left\vert\frac{\partial^2f}{\partial y^2}(x,y_2) - \frac{\partial^2f}{\partial y^2}(x,y_1)\right\vert < \rho \ \forall \vert y_1\vert, \vert y_2\vert \leq M \mbox{ with }  \vert y_2 - y_1\vert \le \varepsilon, \end{array}\right.
\end{align*}
for almost every $x \in \Omega$.
\end{itemize}
\end{Assumption}

\begin{Assumption}
The function $L:\Omega \times \mathbb{R}^2 \longrightarrow \mathbb{R}$ is Carath\'eodory and of class $C^2$ with respect to the second variable. In addition, we assume that
\begin{align*}
\left\{\begin{array}{l} L(x,y,u) = L_a(x,y) + L_b(x,y)u \ \text{ with } \ L_a(\cdot,0), L_b(\cdot,0) \in L^1(\Omega),\vspace{1mm}\\
\displaystyle \forall M > 0 \ \exists  C_{L,M} > 0 \text{ such that }\vspace{1mm}\\
\displaystyle \Big\vert\frac{\partial L}{\partial y}(x,y,u)\Big\vert+ \Big\vert\frac{\partial^2L}{\partial y^2}(x,y,u)\Big\vert \le C_{L,M}\ \forall \vert y\vert, \vert u\vert \le M, \vspace{1mm} \\
\displaystyle \forall \rho > 0 \text{ and } M > 0 \ \vspace{1mm} \exists \varepsilon>0 \text{ such that }\\
\displaystyle\left\vert\frac{\partial^2L}{\partial y^2}(x,y_2,u) - \frac{\partial^2L}{\partial y^2}(x,y_1,u)\right\vert < \rho\ \vert y_1\vert, \vert y_2\vert \leq M \mbox{ with }  \vert y_2 - y_1\vert \le \varepsilon, \end{array}\right.
\end{align*}
for almost every $x \in \Omega$.
\label{A3}
\end{Assumption}
In the paper, we denote by $c$ a positive constant that may change its value from line to line.


\section{Auxiliary results for the state equation}\label{Auxpde}
We collect properties of solutions to linear and semilinear elliptic PDEs. The results in this section are standard by now; we refer to \cite{CDJ2023, CM2021}. In \cite{CDJ2023}, the results are obtained for a non-monotone and non-coercive semilinear elliptic PDE. The PDE considered in this paper can be seen as a special case, and the results apply.
Let $a_0 \in L^\infty(\Omega)$ be a nonnegative function. We consider the properties of solutions to the linear equation 
\begin{equation}
    A z+a_0 z= v \text{ in }\Omega,\ z=0 \text{ on } \Gamma.
   \label{lineq}
\end{equation}
\begin{Theorem}\cite[Lemma 2.2]{CDJ2023}
    Let $v\in L^r(\Omega)$ with $r>n/2$. Then the linear equation \eqref{lineq} has a unique solution $z_v\in H^1_0(\Omega)\cap C(\bar \Omega)$. Further there exists a positive constant $C_r$ independent of $a_0$ and $v$ such that
\begin{equation}\|z_v\|_{H^1_0(\Omega)}+\|z_v\|_{C(\bar \Omega)}\leq C_r\| v\|_{L^r(\Omega)}.
\label{solubound}
\end{equation}
\label{estlincont}
\end{Theorem}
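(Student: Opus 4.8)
The plan is to split the statement into three parts — well-posedness in $H^1_0(\Omega)$, the $L^\infty$-bound, and continuity up to the boundary — arranging the argument so that the sign condition $\alpha\ge 0$ is exploited at each step to keep the constants independent of $\alpha$.

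First I would apply the Lax--Milgram theorem to the bilinear form $a(y,v):=\int_\Omega\sum_{i,j}a_{i,j}\partial_{x_i}y\,\partial_{x_j}v\dx+\int_\Omega\alpha yv\dx$ on $H^1_0(\Omega)$. Boundedness is immediate from $a_{i,j},\alpha\in L^\infty(\Omega)$, and coercivity follows from the uniform ellipticity in Assumption \ref{A1}(i) together with the Poincar\'e inequality; crucially, the term $\int_\Omega\alpha y^2\dx\ge 0$ is simply discarded, so the coercivity constant depends only on $\lambda_{\mathcal A}$ and $\Omega$. Since $n\in\{2,3\}$ and $r>n/2\ge 2n/(n+2)$, the Sobolev embedding $H^1_0(\Omega)\hookrightarrow L^{r'}(\Omega)$ holds, hence by duality $L^r(\Omega)\hookrightarrow H^{-1}(\Omega)$, so $h$ induces a bounded functional on $H^1_0(\Omega)$ and Lax--Milgram gives a unique $y_h\in H^1_0(\Omega)$ with $\|y_h\|_{H^1_0(\Omega)}\le C\|h\|_{L^r(\Omega)}$ for $C=C(\lambda_{\mathcal A},\Omega,r)$.

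Next, for the $L^\infty$-estimate I would run Stampacchia's truncation method. For $k\ge 0$ I test the equation with $(y_h-k)_+\in H^1_0(\Omega)$: on the level set $A_k:=\{x:\ y_h(x)>k\}$ one has $(y_h-k)_+\ge 0$ and $y_h>k\ge 0$, so $\alpha y_h(y_h-k)_+\ge 0$ pointwise and this contribution is dropped, which leaves $\lambda_{\mathcal A}\|\nabla(y_h-k)_+\|_{L^2(\Omega)}^2\le\int_{A_k}|h|\,|y_h-k|\dx$. Combining the Sobolev inequality, H\"older's inequality (it is exactly $r>n/2$ that produces a super-linear iteration exponent), and the standard Stampacchia lemma on the decay of $|A_k|$ yields $\esssup_\Omega y_h\le C\|h\|_{L^r(\Omega)}$ with $C=C(n,r,\lambda_{\mathcal A},|\Omega|)$; applying the same reasoning to $-y_h$, which solves the same equation with datum $-h$, controls $-\essinf_\Omega y_h$, so $\|y_h\|_{L^\infty(\Omega)}\le C\|h\|_{L^r(\Omega)}$ independently of $\alpha$.

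Finally, for continuity I would rewrite the equation as $\mathcal Ay_h=h-\alpha y_h$, whose right-hand side now lies in $L^r(\Omega)$ (indeed in $L^\infty(\Omega)$) by the previous step; the interior De Giorgi--Nash--Moser estimate then gives local H\"older continuity, and since $\Omega$ has Lipschitz boundary it satisfies an exterior cone condition, so a barrier argument together with $y_h=0$ on $\Gamma$ extends H\"older continuity up to $\bar\Omega$, whence $y_h\in C(\bar\Omega)$ and $\|y_h\|_{C(\bar\Omega)}=\|y_h\|_{L^\infty(\Omega)}$. Adding the two estimates gives \eqref{solubound} with $C_r=C_r(n,r,\lambda_{\mathcal A},\Omega)$. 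The main obstacle is precisely the $\alpha$-independence of the constant: this hinges entirely on the sign $\alpha\ge 0$, which permits discarding the zeroth-order term both in the coercivity estimate and in every truncation test, so it never enters the bounds; the continuity assertion is only qualitative and may freely use $\|\alpha\|_{L^\infty(\Omega)}$, since for a continuous function the $C(\bar\Omega)$-norm is the already-controlled $L^\infty(\Omega)$-norm.
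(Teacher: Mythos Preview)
The paper does not supply its own proof of this statement; it merely cites \cite[Lemma 2.2]{CDJ2023} and remarks that the results of Section~\ref{Auxpde} are standard. Your argument is correct and is precisely the standard route one expects behind such a citation: Lax--Milgram for $H^1_0$-well-posedness (using $\alpha\ge 0$ to get $\alpha$-independent coercivity), Stampacchia truncation for the $L^\infty$-bound (again discarding the sign-definite zeroth-order term so that $\alpha$ never enters the constant), and De~Giorgi--Nash--Moser together with a boundary barrier on the Lipschitz domain for continuity up to $\bar\Omega$. There is nothing to compare against in the present paper, and no gap in your proposal.
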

\begin{Lemma}\cite[Lemma 2.3]{CDJ2023}
Assume that $s \in [1,\frac{n}{n - 2})$, $s'$ is its conjugate, and let $a_0 \in L^\infty(\Omega)$ be a nonnegative function. Then, there exists a constant $C_{s'}$ independent of $a_0$ such that
\begin{equation}
\|{z_v}\|_{L^s(\Omega)} \le C_{s'}\|{v}\|_{L^1(\Omega)}, \quad \forall {v} \in H^{-1}(\Omega) \cap L^1(\Omega),
\label{E2.3}
\end{equation}
where ${z_v}$ satisfies the equation \eqref{lineq}, and $C_{s'}$ is given by \eqref{solubound} with $r = s'$.
\label{L2.2}
\end{Lemma}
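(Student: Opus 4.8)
\emph{Proof plan.} This estimate is the classical duality (transposition) argument, and that is how I would prove it; the full details are in \cite[Lemma 2.3]{CDJ2023}. The plan is to establish the bound for $y_h$ and then to note that the bound for $\varphi_h$ follows verbatim after interchanging $\mathcal A$ and $\mathcal A^*$: both operators have the same principal part and the same ellipticity constant, so Theorem \ref{estlincont} holds for either of them with the same constant.

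First I would record that, for $s\in[1,\frac{n}{n-2})$, the conjugate exponent satisfies $s'>n/2$ (with $s'=\infty$ when $s=1$), so that Theorem \ref{estlincont}, applied to $\mathcal A^*+\alpha$, is available for right-hand sides in $L^{s'}(\Omega)$. Then, fixing an arbitrary $g\in L^{s'}(\Omega)$, I would let $z\in H^1_0(\Omega)\cap C(\bar\Omega)$ be the solution of the adjoint problem $\mathcal A^*z+\alpha z=g$ and invoke Theorem \ref{estlincont} to obtain $\|z\|_{C(\bar\Omega)}\le C_{s'}\|g\|_{L^{s'}(\Omega)}$ with $C_{s'}$ independent of $\alpha$.

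The core step is to test the weak formulation of $\mathcal A y_h+\alpha y_h=h$ with $z$ and the weak formulation of $\mathcal A^*z+\alpha z=g$ with $y_h$ (both are admissible, since $y_h,z\in H^1_0(\Omega)$), and to use that the bilinear form of $\mathcal A+\alpha$ is the transpose of that of $\mathcal A^*+\alpha$; this gives $\int_\Omega y_h\,g\dx=\langle h,z\rangle_{H^{-1}(\Omega),H^1_0(\Omega)}$. Since additionally $h\in L^1(\Omega)$ and $z\in C(\bar\Omega)\hookrightarrow L^\infty(\Omega)$, I would identify this pairing with $\int_\Omega h\,z\dx$ and estimate $|\int_\Omega y_h\,g\dx|\le\|h\|_{L^1(\Omega)}\|z\|_{C(\bar\Omega)}\le C_{s'}\|h\|_{L^1(\Omega)}\|g\|_{L^{s'}(\Omega)}$. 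Taking the supremum over all $g$ in the unit ball of $L^{s'}(\Omega)$ then yields $\|y_h\|_{L^s(\Omega)}\le C_{s'}\|h\|_{L^1(\Omega)}$.

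The step I expect to require the most care is the identification of the $H^{-1}$--$H^1_0$ pairing with the integral $\int_\Omega h\,z\dx$ when $h$ is only in $H^{-1}(\Omega)\cap L^1(\Omega)$; I would handle this by approximating $z$ in $H^1_0(\Omega)$ by functions in $C_c^\infty(\Omega)$ that stay bounded in $L^\infty(\Omega)$ (truncation followed by mollification) and passing to the limit, using dominated convergence on the integral and continuity of the pairing on the other side. That the constant $C_{s'}$ depends only on $s'$ and $\Omega$, and not on $\alpha$ or on the coefficients beyond their ellipticity and $L^\infty$ bounds, is inherited directly from Theorem \ref{estlincont}.
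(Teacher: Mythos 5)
Your proof is correct and is exactly the standard transposition argument underlying the cited result \cite[Lemma 2.3]{CDJ2023}, which the paper itself does not reprove; the fact that the lemma's constant is declared to be $C_{s'}$ from \eqref{solubound} with $r=s'$ confirms that the intended proof is precisely your duality against the adjoint problem with $L^{s'}$ data. Your verification that $s<\tfrac{n}{n-2}$ gives $s'>n/2$, and your care in identifying the $H^{-1}$--$H^1_0$ pairing with $\int_\Omega hz\dx$ for $h\in H^{-1}(\Omega)\cap L^1(\Omega)$, are both appropriate and complete the argument.
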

For the semilinear state equation, we cite the following regularity result.
\begin{Theorem}\cite[Theorem 1]{CM2021}\label{contandregsem}
For every $u \in L^r(\Omega)$ with $r > n/2$ there exists a unique $y_u \in Y :=
H^1_0 (\Omega)\cap C(\bar  \Omega)$ solution of \eqref{see1}. Moreover, there exists a constant $T_r > 0$ independent of $u$ such that
\begin{equation*}
    \| y_u\|_{H^1_0(\Omega)}+\|y_u\|_{C(\bar\Omega)}\leq T_r(\|u\|_{L^r(\Omega)}+\|f(\cdot,0)\|_{L^\infty(\Omega)}).
\end{equation*}
If $u_k\rightharpoonup u$ weakly in $L^r(\Omega)$, then we have the strong convergence 
\begin{equation*}
    \| y_{u_k}-y_u\|_{H^1_0(\Omega)}+\|y_{u_k}-y_u\|_{C(\bar\Omega)}\to 0.
\end{equation*}
Further if $u\in L^\infty(\Omega)$, we have $y_u\in W^{2,r}(\Omega)$ for all $r<\infty$ and 
    \begin{equation*}
        \|y_u\|_{W^{2,r}(\Omega)}\leq M_0r\Big( \| u\|_{L^\infty(\Omega)}+\| f(\cdot,0)\|_{L^\infty(\Omega)}\Big)
    \end{equation*}
    for a positive constant $M_0$ independent of $u$ and $r$.
\end{Theorem}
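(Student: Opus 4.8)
The plan is to follow the Stampacchia-truncation / monotone-operator route (as in Casas–Mateos), the key observation being that the linear estimates of Theorem~\ref{estlincont} (and Lemma~\ref{L2.2}) hold with constants independent of the zero-order coefficient. For existence I would first reduce to a globally Lipschitz nonlinearity: for $k\in\mathbb N$ set $f_k(x,y):=f(x,\pi_k(y))$ with $\pi_k$ the projection onto $[-k,k]$. Since $\partial f/\partial y\ge 0$ and is bounded by $C_{f,k}$ on $[-k,k]$, each $f_k(x,\cdot)$ is nondecreasing and globally Lipschitz with $f_k(\cdot,0)=f(\cdot,0)\in L^\infty(\Omega)$, so $y\mapsto\mathcal A y+f_k(\cdot,y)$ is a bounded, coercive, monotone, hemicontinuous operator from $H_0^1(\Omega)$ to $H^{-1}(\Omega)$, and Browder–Minty yields a unique $y_k\in H_0^1(\Omega)$ solving the truncated equation. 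Writing $\mathcal A y_k+\alpha_k y_k=u-f(\cdot,0)$ with $\alpha_k(x):=\big(f_k(x,y_k(x))-f_k(x,0)\big)/y_k(x)\in[0,C_{f,k}]$ (and $\alpha_k:=0$ where $y_k=0$), Theorem~\ref{estlincont} gives $\|y_k\|_{H_0^1(\Omega)}+\|y_k\|_{C(\bar\Omega)}\le C_r\|u-f(\cdot,0)\|_{L^r(\Omega)}$ with $C_r$ \emph{independent of $k$}; hence for $k$ larger than this uniform bound one has $\pi_k(y_k)=y_k$, so $y_k$ solves \eqref{see1}, which proves existence and the stated estimate after bounding $\|f(\cdot,0)\|_{L^r}\le|\Omega|^{1/r}\|f(\cdot,0)\|_{L^\infty}$. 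Uniqueness follows by subtracting two solutions, testing with their difference, and combining coercivity of $\mathcal A$ with monotonicity of $f$ in $y$.

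For the stability statement, with $u_k\rightharpoonup u$ in $L^p(\Omega)$ (we may take $p>n/2$, which is what the a priori bound needs), the sequence $\{y_{u_k}\}$ is bounded in $H_0^1(\Omega)\cap C(\bar\Omega)$; extracting a subsequence gives $y_{u_k}\rightharpoonup\hat y$ in $H_0^1(\Omega)$, strongly in $L^2(\Omega)$ and a.e.\ by Rellich, and the uniform $L^\infty$ bound plus dominated convergence yields $f(\cdot,y_{u_k})\to f(\cdot,\hat y)$ in every $L^q(\Omega)$, $q<\infty$. Passing to the limit in the weak formulation identifies $\hat y=y_u$, and uniqueness upgrades this to convergence of the whole sequence. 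Strong $H_0^1$ convergence is obtained by testing the equation for $y_{u_k}-y_u$ with itself: the coercivity term is bounded by $\langle u_k-u,\,y_{u_k}-y_u\rangle$ minus the (nonnegative-in-the-limit, $L^1$-small) nonlinear term, both tending to $0$ since $y_{u_k}-y_u\to0$ in every $L^q$, $q<\infty$. For strong $C(\bar\Omega)$ convergence I would split $y_{u_k}-y_u=z_k+w_k$ with $\mathcal A z_k=u_k-u$ and $\mathcal A w_k=-(f(\cdot,y_{u_k})-f(\cdot,y_u))$: the second right-hand side tends to $0$ in $L^q$, $q>n/2$, so $w_k\to0$ in $C(\bar\Omega)$ by Theorem~\ref{estlincont}, while $z_k\to0$ in $C(\bar\Omega)$ follows from the compact Hölder regularity $\mathcal A^{-1}\colon L^r(\Omega)\to C^{0,\nu}(\bar\Omega)$, $r>n/2$, together with $u_k-u\rightharpoonup0$.

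For the last assertion, if $u\in L^\infty(\Omega)$ then $y_u\in C(\bar\Omega)$ with $\|y_u\|_{C(\bar\Omega)}\le T(\|u\|_{L^\infty(\Omega)}+\|f(\cdot,0)\|_{L^\infty(\Omega)})$ by the above, so $f(\cdot,y_u)\in L^\infty(\Omega)$ with $\|f(\cdot,y_u)\|_{L^\infty(\Omega)}\le\|f(\cdot,0)\|_{L^\infty(\Omega)}+C_{f,M}\|y_u\|_{C(\bar\Omega)}$. Hence $\mathcal A y_u=u-f(\cdot,y_u)\in L^r(\Omega)$ for every $r<\infty$, and for $\{a_{i,j}\}\in C^{0,1}(\bar\Omega)$ the Calderón–Zygmund $L^r$-estimate for divergence-form operators gives $y_u\in W^{2,r}(\Omega)$ with $\|y_u\|_{W^{2,r}(\Omega)}\le C\,r\,\|u-f(\cdot,y_u)\|_{L^r(\Omega)}$, the linear-in-$r$ dependence being the known growth of the singular-integral constant as $r\to\infty$; combining with the $L^\infty$ bounds above yields the claimed estimate.

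I expect the delicate points to be, first, the absence of a global growth condition on $f(x,\cdot)$, which forces the truncation and makes the $k$-independence of the constant in Theorem~\ref{estlincont} essential; and second, promoting weak $L^p$ convergence of the controls to strong $C(\bar\Omega)$ convergence of the states, which requires the compactness of $\mathcal A^{-1}$ into a Hölder space and not merely the energy estimate. Tracking the sharp $O(r)$ dependence in the $W^{2,r}$ bound is a third, more technical, issue.
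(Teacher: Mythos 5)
The paper does not prove this statement; it is quoted directly from \cite[Theorem 1]{CM2021}. Your argument --- truncation to a globally Lipschitz monotone nonlinearity plus Browder--Minty, the $\alpha$-uniform Stampacchia bound of Theorem~\ref{estlincont} to remove the truncation, De Giorgi-type compactness of $\mathcal A^{-1}\colon L^p(\Omega)\to C^{0,\nu}(\bar\Omega)$ for upgrading weak control convergence to uniform state convergence, and Calder\'on--Zygmund estimates with the $O(r)$ constant for the $W^{2,r}$ bound --- is precisely the standard route of the cited reference and is correct.
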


For each $r>n/2$, we define the map $G_r:L^r(\Omega)\to H_0^1(\Omega)\cap C(\bar\Omega)$ by $G_r(u)=y_u$.

\begin{Theorem}\cite[Theorem 2.6]{CDJ2023}
		Let Assumption \ref{A1} hold. For every $r > \frac{n}{2}$ the map $G_r$ is of class $C^2$, and the first and second derivatives at $u\in L^r(\Omega)$ in the directions $v, v_1, v_2\in L^r(\Omega)$, denoted by $z_{u,v} = G'_r(u)v$ and $z_{u,v_1,v_2} = G''_r(u)(v_1,v_2)$, are the solutions of the equations 
\begin{align}
& Az+\frac{\partial f}{\partial y}(x,y_u)z = v \text{ in }\Omega,\ z=0\text{ on } \Gamma,\label{E2.9}\\
& Az+\frac{\partial f}{\partial y}(x,y_u)z = -\frac{\partial^2f}{\partial y^2}(x,y_u)z_{u,v_1}z_{u,v_2} \text{ in }\Omega, \ z=0\text{ on } \Gamma,\label{E2.10}
\end{align}
respectively.
\label{T2.2}
\end{Theorem}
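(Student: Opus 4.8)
The natural route is the implicit function theorem. Let $Y:=H^1_0(\Omega)\cap C(\bar\Omega)$ and let $S\colon L^r(\Omega)\to Y$ be the bounded solution operator of the linear problem $\mathcal Ay=h$ (the case $\alpha\equiv0$ of \eqref{lineq}), so that $\mathcal A Sh=h$; its boundedness into $Y$ is the content of Theorem~\ref{estlincont} together with Lax--Milgram. Since $y\in C(\bar\Omega)$, the superposition operator $N_f(y):=f(\cdot,y)$ maps $C(\bar\Omega)$ into $L^\infty(\Omega)$, and the state equation \eqref{see1} is equivalent to
\begin{equation*}
\Phi(y,u):=y+S\bigl(N_f(y)\bigr)-Su=0,\qquad \Phi\colon Y\times L^r(\Omega)\to Y.
\end{equation*}
The plan is: (i) show $\Phi\in C^2$; (ii) show $\partial_y\Phi(y_u,u)$ is an isomorphism of $Y$; (iii) apply the implicit function theorem to get $G_r\in C^2$; (iv) differentiate $\Phi(G_r(u),u)\equiv0$ to read off \eqref{E2.9}--\eqref{E2.10}.

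For (i), the term $-Su$ is linear and continuous and $S$ is a fixed bounded linear map, so everything reduces to the smoothness of $N_f\colon C(\bar\Omega)\to L^\infty(\Omega)$. By the classical differentiability theory for Nemytskii operators, under the local boundedness of $\tfrac{\partial f}{\partial y},\tfrac{\partial^2 f}{\partial y^2}$ and the \emph{local uniform continuity} of $\tfrac{\partial^2 f}{\partial y^2}$ required in Assumption~\ref{A1}(ii), $N_f$ is of class $C^2$ with
\[
N_f'(y)v=\tfrac{\partial f}{\partial y}(\cdot,y)\,v,\qquad N_f''(y)(v_1,v_2)=\tfrac{\partial^2 f}{\partial y^2}(\cdot,y)\,v_1v_2,
\]
the last assumption being exactly what upgrades twice Gâteaux differentiability to $C^2$. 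For (ii), $\partial_y\Phi(y_u,u)=I+K$ with $K:=S\circ M_{a}$, where $M_a$ is multiplication by $a:=\tfrac{\partial f}{\partial y}(\cdot,y_u)\in L^\infty(\Omega)$, $a\ge0$. Injectivity is immediate: $v+Kv=0$ implies, after applying $\mathcal A$, that $\mathcal Av+av=0$, hence $v=0$ by the uniqueness in Theorem~\ref{estlincont}. Surjectivity follows from the Fredholm alternative once $K$ is seen to be compact on $Y$: $M_a$ maps $Y$ boundedly into $L^2(\Omega)$, which embeds compactly into $H^{-1}(\Omega)$ (Rellich and duality), and via the Hölder estimate for $\mathcal A$ with right-hand side in $L^r(\Omega)$, $r>n/2$, the operator $S$ maps $L^r(\Omega)$ into a space compactly embedded in $C(\bar\Omega)$; combining the two gives compactness of $K$ in the norm of $Y=\|\cdot\|_{H^1_0}+\|\cdot\|_{C(\bar\Omega)}$. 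Thus $I+K$ is an isomorphism, and the implicit function theorem yields a locally unique $C^2$ map $u\mapsto y_u$; since $u\in L^r(\Omega)$ is arbitrary and the global well-posedness of $G_r$ is guaranteed by the cited semilinear regularity result of \cite[Theorem~1]{CM2021}, $G_r$ is $C^2$ on all of $L^r(\Omega)$.

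It remains to identify the derivatives. Differentiating $\Phi(G_r(u),u)\equiv0$ once gives $\partial_y\Phi(y_u,u)\,G_r'(u)v=-\partial_u\Phi(y_u,u)v=Sv$, that is $z_{u,v}+S(a\,z_{u,v})=Sv$; applying $\mathcal A$ this is precisely \eqref{E2.9}, and $z_{u,v}\in Y$ since $v\in L^r(\Omega)$ and $a\ge0$ is bounded. Differentiating once more and using $\partial^2_{uu}\Phi=\partial^2_{uy}\Phi=0$ yields $\partial_y\Phi(y_u,u)\,G_r''(u)(v_1,v_2)=-\partial^2_{yy}\Phi(y_u,u)(z_{u,v_1},z_{u,v_2})=-S\bigl(\tfrac{\partial^2 f}{\partial y^2}(\cdot,y_u)\,z_{u,v_1}z_{u,v_2}\bigr)$, which rearranges to \eqref{E2.10}; the right-hand side there lies in $L^\infty(\Omega)\subset L^r(\Omega)$ because $z_{u,v_i}\in C(\bar\Omega)$, so Theorem~\ref{estlincont} again places $z_{u,v_1,v_2}$ in $Y$. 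The main obstacle is step (ii): one must run the Fredholm argument in $Y=H^1_0(\Omega)\cap C(\bar\Omega)$ rather than in $L^2$ or $H^1_0$ alone, which forces the combined use of $L^2\hookrightarrow H^{-1}$ compactly and the Hölder regularity/compactness of $S$ into $C(\bar\Omega)$. A purely hands-on alternative — defining $z_{u,v}$ as the solution of \eqref{E2.9} and estimating $\|y_{u+sv}-y_u-sz_{u,v}\|_Y=o(s)$ directly from the linear bounds of Theorem~\ref{estlincont} and a Taylor expansion of $f$ — also works, but needs the same local uniform continuity of $\tfrac{\partial^2 f}{\partial y^2}$ to control the second-order remainder.
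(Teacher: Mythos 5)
The paper offers no proof of this statement; it is imported verbatim from \cite{CDJ2023} (Theorem 2.6), and the proof there (following Casas--Mateos) is precisely the implicit-function-theorem argument you give: the fixed-point reformulation $\Phi(y,u)=y+S(N_f(y))-Su$, the $C^2$ smoothness of the Nemytskii operator from $C(\bar\Omega)$ to $L^\infty(\Omega)$ furnished by the local boundedness and local uniform continuity of $\frac{\partial^2 f}{\partial y^2}$ in Assumption~\ref{A1}(ii), the isomorphism property of $I+S\circ M_a$ via injectivity plus compactness, and the identification of $z_{u,v}$ and $z_{u,v_1,v_2}$ by differentiating $\Phi(G_r(u),u)\equiv 0$. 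Your write-up is correct; the only remark worth making is that the compactness of $K=S\circ M_a$ on $Y$ can be obtained a little more directly (a bounded sequence in $Y$ is bounded in $H^1_0(\Omega)$, hence has a subsequence converging strongly in $L^2(\Omega)\subset L^r(\Omega)$ for some $r>n/2$ because $n\le 3$, and $S:L^r(\Omega)\to Y$ is continuous by Theorem~\ref{estlincont}), which avoids invoking boundary H\"older regularity.
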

\begin{Lemma}\cite[Lemma 2.7]{CDJ2023}
The following statements are fulfilled.
Suppose that $s \in [1,\frac{n}{n - 2})$. Then, there exist a constant $M_s$ depending on $s$ such that for every $u, \bar u \in \Uad$
\begin{align}
&\|y_u - y_{\bar u} - z_{\bar u,u - \bar u}\|_{L^{s}(\Omega)}\le M_s\|y_u-y_{\bar u}\|^2_{L^{2}(\Omega)}.
\label{E2.12}
\end{align}
There exists $\varepsilon >0$ such that for all $u,\bar u\in \mathcal{U}$ with $\|y_u-y_{\bar u}\|_{C(\bar \Omega)}<\varepsilon$ the following inequality is satisfied
\begin{align}\label{E2.14}
1/2\|y_u-y_{\bar u}\|_{C(\bar \Omega)} \leq \|z_{\bar u,u-\bar u}\|_{C(\bar \Omega)}\leq 3/2\|y_u-y_{\bar u}\|_{C(\bar \Omega)}.
\end{align}
\begin{align}\label{E2.14b}
1/2\|y_u-y_{\bar u}\|_{L^2(\Omega)} \leq \|z_{\bar u,u-\bar u}\|_{L^2(\Omega)}\leq 3/2\|y_u-y_{\bar u}\|_{L^2(\Omega)}.
\end{align}
\label{L2.3}
\end{Lemma}	

We need the following lemma, which is standard, proofs \eqref{equation:check}can be found in \cite[Theorem 8.30]{GT1983} or \cite[Theorem 4.2]{G1965} respectively. The estimate \eqref{equation:wellkn} is standard and also appears in \cite[Lemma 1]{CM2021}.

{
\begin{Lemma}\label{stsupdual}
Let $u, \bar u\in \mathcal U$. Then for $n<p$ and some positive constant $\check C_p$ independent of $u$ and $\bar u$ it holds
\begin{equation}\label{equation:wellkn}
    \| y_u-y_{\bar u}\|_{H^1_0(\Omega)}\leq  1/\lambda_A \|u-\bar u\|_{W^{-1,2}(\Omega)}.
\end{equation}
\begin{equation}\label{equation:check}
    \| y_u-y_{\bar u}\|_{C(\bar \Omega)}\leq \check C_p \|u-\bar u\|_{W^{-1,p}(\Omega)}.
\end{equation}
\end{Lemma}
}
\section{The optimal control problem}\label{CProblem}
The optimal control problem \eqref{const}-\eqref{ocp1} is well posed under Assumptions \ref{A1} and \ref{A3}. By the direct method of calculus of variations, we obtain the existence of at least one global minimizer, 
see \cite[Theorem 5.7]{Troltzsch2010}. In this section, we calculate the first and second variation of the objective functional, state the first-order necessary optimality conditions, and introduce the sufficient conditions for optimality.
\begin{Definition}
We say that $\bar u \in \mathcal U$ is an $L^r (\Omega)$-weak local minimum of problem \eqref{const}-\eqref{see1}, if there exists 
some positive $\varepsilon$ such that
\[
J( \bar u) \le J(u) \  \ \ \forall u \in \mathcal U \text{ with } \|u-\bar u\|_{L^r(\Omega)}\le \varepsilon.
\]
We say that $\bar u \in \mathcal U$ is a strong local minimum of \eqref{const}-\eqref{see1}, if there exists $\varepsilon >0$ such that
\[
J( \bar u) \le J(u) \ \ \ \forall u \in \mathcal U \text{ with } \|y_u-y_{\bar u}\|_{C(\bar \Omega)}\le \varepsilon.
\]
We say that $\bar u \in \mathcal U$ is a strict weak (strong) local minimum if the above inequalities are
strict for $u\ne \bar u $.
\end{Definition}
Strong local minimizers were first considered in \cite[Definition 1.6]{BBS2014}. For a discussion of these notions of optimality, we refer to \cite[Lemma 2.8]{CM2020}.

\begin{Theorem} \label{T3.1}
For every $r > \frac{n}{2}$, the functional $J:L^r(\Omega) \longrightarrow \mathbb{R}$ is of class $C^2$. Moreover, 
given $u, v, v_1, v_2 \in L^r(\Omega)$ we have
\begin{align*}
     J'(u)v& =\int_\Omega\Big[\frac{\partial L}{\partial y}(x,y_u,u)\Big]z_{u,v}+\Big[\frac{\partial L}{\partial u}(x,y_u,u)\Big]v\dx= \int_\Omega\Big[\varphi_u+{L_b(x,y_u)}\Big]v\dx,
\end{align*}
\begin{align*}
   J''(u)(v_1,v_2)& = \int_\Omega\Big[\frac{\partial^2L}{\partial y^2}(x,y_u,u) - \varphi_u\frac{\partial^2f}{\partial y^2}(x,y_u)\Big]
       z_{u,v_1}z_{u,v_2}\dx + \int_\Omega \Big[{\frac{\partial L_b}{\partial y}(x,y_u)}\Big](z_{u,v_1}v_2+z_{u,v_2}v_1)\dx.
\end{align*}
Here, $\varphi_u \in H^1_0(\Omega) \cap C(\bar \Omega)$ is the unique solution of the adjoint equation
\begin{equation}
      \left\{\begin{array}{l}\displaystyle A \varphi + \frac{\partial f}{\partial y}(x,y_u)\varphi=  
    \frac{\partial L}{\partial y}(x,y_u,u) \text{ in } \Omega,\\ \varphi = 0\text{ on } \partial\Omega.\end{array}\right.
\label{E3.6}
\end{equation}
\end{Theorem}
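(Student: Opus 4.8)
The plan is to exhibit $J$ as a composition of maps whose regularity is either already available or entirely standard, apply the chain rule, and then use the adjoint state to rewrite the resulting expressions. Using the affine structure $L(x,y,u)=L_a(x,y)+L_b(x,y)u$, write
\[
J(u)=\Phi_a(G_r(u))+\int_\Omega N_b(G_r(u))\,u\dx ,
\]
where $G_r:L^r(\Omega)\to H^1_0(\Omega)\cap C(\bar\Omega)$ is the control-to-state map, which is of class $C^2$ by Theorem \ref{T2.2}; $\Phi_a:C(\bar\Omega)\to\mathbb R$ is $\Phi_a(y)=\int_\Omega L_a(\cdot,y)\dx$; and $N_b:C(\bar\Omega)\to L^\infty(\Omega)\hookrightarrow L^{r'}(\Omega)$ ($r'$ the conjugate exponent of $r$) is the Nemytskii operator $N_b(y)=L_b(\cdot,y)$. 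The last term is the composition of $u\mapsto(G_r(u),u)$ with $(y,u)\mapsto(N_b(y),u)$ followed by the bounded bilinear pairing $L^{r'}(\Omega)\times L^r(\Omega)\to\mathbb R$, which is $C^\infty$. Hence it suffices to prove that $\Phi_a$ and $N_b$ are of class $C^2$ on bounded subsets of $C(\bar\Omega)$ and then to identify the derivatives.

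For the Nemytskii step, note that by the regularity result of \cite{CM2021} cited above, every $u$ ranging over a bounded set of $L^r(\Omega)$ produces a state $y_u$ bounded in $C(\bar\Omega)$ by a fixed constant $M$; therefore the growth bounds of Assumption \ref{A3} for $|y|,|u|\le M$ apply uniformly, and the uniform continuity of $\partial^2_{yy}L$ in $y$ on $\{|y|\le M\}$ gives continuity of the candidate second derivatives. One checks, by the mean value theorem together with dominated convergence in the sup-norm (exactly as in \cite{CM2020,CM2021}), that $\Phi_a'(y)h=\int_\Omega\partial_yL_a(\cdot,y)h\dx$, $\Phi_a''(y)(h_1,h_2)=\int_\Omega\partial^2_{yy}L_a(\cdot,y)h_1h_2\dx$, $N_b'(y)h=\partial_yL_b(\cdot,y)h$ and $N_b''(y)(h_1,h_2)=\partial^2_{yy}L_b(\cdot,y)h_1h_2$. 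Because $L$ is affine in $u$, there is no $u$-Nemytskii operator to differentiate and no $\partial^2_{uu}L$ term appears.

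Combining the chain and product rules with Theorem \ref{T2.2}, and writing $z_{u,v}=G_r'(u)v$, $z_{u,v_1,v_2}=G_r''(u)(v_1,v_2)$, one obtains (using $\partial_yL=\partial_yL_a+\partial_yL_b\,u$ and $\partial_uL=L_b$) the first expression for $J'(u)v$ in the statement, and, differentiating $u\mapsto J'(u)v_1$ in the direction $v_2$,
\[
J''(u)(v_1,v_2)=\int_\Omega\partial^2_{yy}L(x,y_u,u)z_{u,v_1}z_{u,v_2}\dx+\int_\Omega\partial^2_{uy}L(x,y_u,u)\big(z_{u,v_1}v_2+z_{u,v_2}v_1\big)\dx+\int_\Omega\partial_yL(x,y_u,u)z_{u,v_1,v_2}\dx .
\]
Now introduce $p_u\in H^1_0(\Omega)\cap C(\bar\Omega)$ solving \eqref{E3.6}; this is well defined because $\partial_yL(\cdot,y_u,u)=\partial_yL_a(\cdot,y_u)+\partial_yL_b(\cdot,y_u)u\in L^r(\Omega)$ with $r>n/2$, so Theorem \ref{estlincont} applies. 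Testing \eqref{E3.6} with $z_{u,v}$ and the linearized equation \eqref{E2.9} with $p_u$ — all of $z_{u,v}$, $z_{u,v_1,v_2}$, $p_u$ lie in $H^1_0(\Omega)\cap C(\bar\Omega)$, so the weak formulations pair legitimately — gives $\int_\Omega\partial_yL(x,y_u,u)z_{u,v}\dx=\int_\Omega p_u v\dx$, which yields the reduced form of $J'$. Similarly, testing \eqref{E3.6} with $z_{u,v_1,v_2}$ and \eqref{E2.10} with $p_u$ gives $\int_\Omega\partial_yL(x,y_u,u)z_{u,v_1,v_2}\dx=-\int_\Omega p_u\,\partial^2_{yy}f(x,y_u)z_{u,v_1}z_{u,v_2}\dx$; substituting this into the expression above produces the stated formula for $J''$.

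The main obstacle is the Nemytskii step: establishing Fréchet differentiability of $y\mapsto L_a(\cdot,y)$ and $y\mapsto L_b(\cdot,y)$ and of their $y$-derivatives with respect to the $C(\bar\Omega)$ topology, where the uniform continuity hypothesis on $\partial^2_{yy}L$ is exactly what is needed for continuity of the second derivative. Everything else — well-posedness of $p_u$, the legitimacy of the integration-by-parts identities, and the algebra collapsing the formulas — is routine once one records that $z_{u,v}$, $z_{u,v_1,v_2}$ and $p_u$ all belong to $H^1_0(\Omega)\cap C(\bar\Omega)$.
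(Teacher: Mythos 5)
Your argument is correct and is exactly the standard proof of this result; the paper states Theorem \ref{T3.1} without proof, relying on the identical arguments in \cite{CM2020,CM2021,CDJ2023}, which proceed precisely as you do: $C^2$ regularity of the control-to-state map (Theorem \ref{T2.2}), Fr\'echet differentiability of the Nemytskii operators $y\mapsto L_a(\cdot,y)$ and $y\mapsto L_b(\cdot,y)$ on $C(\bar\Omega)$ via the uniform-continuity hypothesis on $\partial^2L/\partial y^2$ in Assumption \ref{A3}, the chain and product rules exploiting the affine structure in $u$, and elimination of $z_{u,v}$ and $z_{u,v_1,v_2}$ by pairing the adjoint equation \eqref{E3.6} with \eqref{E2.9} and \eqref{E2.10}. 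The only point worth flagging is that your claim $N_b:C(\bar\Omega)\to L^\infty(\Omega)$ holds only modulo the term $L_b(\cdot,0)$, which Assumption \ref{A3} places merely in $L^1(\Omega)$; this concerns the finiteness of $J$ on all of $L^r(\Omega)$ rather than your differentiation argument, and is an issue inherited from the theorem's statement (it disappears on $\Uad\subset L^\infty(\Omega)$, where the result is actually used).
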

{Due to the standing assumptions of the paper, we can even infer that $\varphi_u\in W^{2,p}$, $p<\infty$. To obtain this regularity for a variational solution to \eqref{E3.6} the boundary must have regularity $C^{1,1}$, see \cite[Section 2]{G1985}}. 
\noindent We define the Hamiltonian 
$\Omega\times\mathbb R\times \mathbb R\times \mathbb R \ni (x,y,\varphi,u) \mapsto H(x,y,\varphi,u) \in \mathbb R$
by
\begin{align}\label{Hamil}
       H(x,y,\varphi,u):=L(x,y,u)+\varphi(u-f(x,y)).
\end{align}
The following local form of the Pontryagin type necessary optimality conditions for problem \eqref{const}-\eqref{ocp1} stated below,
is well known (see e.g. \cite{C1993Pont, CM2020, Troltzsch2010} and \cite[Theorem 4]{CM2021}).
 
\begin{Theorem}
If $\bar u$ is a weak or strong local minimizer for problem  \eqref{ocp1}-\eqref{const}, 
then there exist unique elements $\bar y, \bar \varphi\in H^1_0(\Omega)\cap L^\infty(\Omega)$ such that 
\begin{align}
& \left\{\begin{array}{l} A\bar y + f(x,\bar y) = \bar u \text{ in } \Omega,\\ \bar y = 
0\text{ on } \partial\Omega.\end{array}\right.
\label{E3.7}\\
&\left\{\begin{array}{l}\displaystyle A\bar \varphi =
 \frac {\partial H}{\partial y}(x,\bar y,\bar \varphi,\bar u) \text{ in } \Omega,\\ \bar \varphi = 0\text{ on } \partial\Omega.\end{array}\right.
\label{E3.8}\\
&\int_\Omega\frac {\partial H}{\partial u}(x,\bar y,\bar \varphi,\bar u)(u - \bar u)\dx \ge 0 \quad \forall u \in \Uad. \label{E3.9}
\end{align}
\label{pontryagin}
\end{Theorem}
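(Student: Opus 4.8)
The plan is to exhibit $\bar y$ and $\bar p$ explicitly in terms of $\bar u$ and then derive \eqref{E3.9} by the classical convex–variation argument. First I would set $\bar y := y_{\bar u}$, the unique solution of the state equation \eqref{see1} associated with $\bar u$; since $\bar u\in\Uad\subset L^\infty(\Omega)$, the cited regularity result for \eqref{see1} gives existence, uniqueness and $\bar y\in H^1_0(\Omega)\cap C(\bar\Omega)\subset H^1_0(\Omega)\cap L^\infty(\Omega)$, and equation \eqref{E3.7} is just the definition of $\bar y$. Next I would define $\bar p := p_{\bar u}$ as the unique solution of the adjoint equation \eqref{E3.6}; uniqueness and the regularity $\bar p\in H^1_0(\Omega)\cap C(\bar\Omega)\subset H^1_0(\Omega)\cap L^\infty(\Omega)$ follow from the linear theory of Theorem \ref{estlincont} and Lemma \ref{L2.2}, applied with the nonnegative coefficient $\alpha=\frac{\partial f}{\partial y}(\cdot,\bar y)$ and right-hand side $\frac{\partial L}{\partial y}(\cdot,\bar y,\bar u)$, which lies in $L^\infty(\Omega)$ by Assumption \ref{A3} because $\bar y,\bar u$ are bounded.

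To obtain \eqref{E3.8} I would differentiate the Hamiltonian \eqref{Hamil} in $y$, which gives $\frac{\partial H}{\partial y}(x,\bar y,\bar p,\bar u)=\frac{\partial L}{\partial y}(x,\bar y,\bar u)-\bar p\,\frac{\partial f}{\partial y}(x,\bar y)$; hence the equation $\mathcal A^*\bar p=\frac{\partial H}{\partial y}(x,\bar y,\bar p,\bar u)$ coincides with \eqref{E3.6}, and \eqref{E3.8} together with the uniqueness of $\bar y$ and $\bar p$ is established.

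It remains to prove the variational inequality \eqref{E3.9}. Fix an arbitrary $u\in\Uad$. Since $\Uad$ is convex, $u_t:=\bar u+t(u-\bar u)\in\Uad$ for every $t\in[0,1]$, and because $u,\bar u\in L^\infty(\Omega)$ we have $\|u_t-\bar u\|=t\,\|u-\bar u\|\le\varepsilon$ for all sufficiently small $t>0$, where $\varepsilon$ is the radius from the definition of weak local minimality (in whichever $L^r$, $r\ge 1$, norm it is stated). Thus $J(u_t)\ge J(\bar u)$ for such $t$; dividing by $t$, letting $t\downarrow 0$, and using that $J$ is of class $C^1$ on $L^r(\Omega)$ (Theorem \ref{T3.1}), I obtain $J'(\bar u)(u-\bar u)\ge 0$. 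Finally, substituting the reduced-gradient form $J'(\bar u)v=\int_\Omega\big[\bar p+\frac{\partial L}{\partial u}(x,\bar y,\bar u)\big]v\dx$ from Theorem \ref{T3.1} and using $\frac{\partial H}{\partial u}(x,\bar y,\bar p,\bar u)=\frac{\partial L}{\partial u}(x,\bar y,\bar u)+\bar p$ yields $\int_\Omega\frac{\partial H}{\partial u}(x,\bar y,\bar p,\bar u)(u-\bar u)\dx\ge 0$, which is \eqref{E3.9} since $u\in\Uad$ was arbitrary.

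No genuine obstacle is expected: the argument is routine. The only points requiring a little care are the chain-rule identification of the adjoint equation \eqref{E3.6} with $\mathcal A^*\bar p=\frac{\partial H}{\partial y}$, and the justification of the limit $t\downarrow 0$, which is legitimate precisely because $J$ is Fréchet-differentiable on $L^r(\Omega)$ and $\Uad$ is convex, so the right-sided directional derivative of $J$ at $\bar u$ in the feasible direction $u-\bar u$ equals $J'(\bar u)(u-\bar u)$.
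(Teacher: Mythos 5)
Your proposal is correct and is precisely the standard argument: the paper itself gives no proof of Theorem \ref{pontryagin}, presenting it as a well-known result with references, and the classical derivation in those sources is exactly what you wrote (state and adjoint defined via \eqref{see1} and \eqref{E3.6}, identification of \eqref{E3.6} with the Hamiltonian form \eqref{E3.8}, and the convex-variation limit $t\downarrow 0$ giving $J'(\bar u)(u-\bar u)\ge 0$, rewritten via Theorem \ref{T3.1} as \eqref{E3.9}). No gaps.
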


\subsection{Sufficient assumption for local optimality}\label{asuffcond}
In this subsection, we discuss three assumptions of different strengths that all imply strict local optimality and appeared recently in the context of affine PDE-constrained optimal control problems in \cite{CDJ2023, DJV2022}. In what follows, {$(\bar u,\bar y,\bar \varphi)$ denotes a fixed triplet satisfying the first-order necessary optimality condition. To shorten the notation, we denote $\bar H_u(x):=\frac{\partial H}{\partial u}(x,\bar y(x),\bar \varphi(x),\bar u(x))$}. 
\begin{Assumption}\label{asuff2}
{Let $\gamma\in (2/(2+n),1]$ and $\beta\in \{1/2,1\}$ be given}. There exist positive constants $\kappa$ and $\alpha$ such that
\begin{equation}\label{growthcontstab}
J'(\bar u)(u-\bar u)+\beta J''(\bar u)(u-\bar u)^2\geq \kappa\|u-\bar u\|_{L^1(\Omega)}^{1+\frac{1}{\gamma}}
\end{equation}
for all $u \in \mathcal U$ with $\|u-\bar u\|_{L^1(\Omega)}<\alpha$.
\end{Assumption}
Assumption \ref{asuff2}$(\beta=1)$ was first considered in the context of elliptic PDE-constrained optimization in \cite{DJV2022}. If $\bar u$ satisfies the first-order optimality condition \eqref{E3.9}, Assumption \ref{asuff2}$(\beta=1)$ implies Assumption \ref{asuff2}$(\beta=1/2)$. If the second variation of the objective funcional at the control $\bar u$ is nonnegative, the cases $\beta \in \{1/2,1\}$ are equivalent. Indeed, the second variation can be negative at $\bar u$ for certain directions for box-constrained optimal control problems; see, for instance, \cite[Example 2]{DJV2022b}.
Further Assumption \ref{asuff2} implies the bang-bang structure of the control $\bar u$, see \cite[Proposition 4.1]{DJV2022}. {At this point, let us also remark that if the control $\bar u$ is bang-bang, then the convergence $u_k \rightharpoonup^\star \bar u$ in $L^\infty(\Omega)$ implies the strong convergence of $\{u_k\}_{k=1}^\infty$ to $\bar u$ in $L^1(\Omega)$, see \cite[Lemma 4.2]{DJV2022}. }

Let us consider the two assumptions on the optimal control problem introduced in \cite{CDJ2023}. Due to \eqref{E2.3}, they present a weakening of Assumption \ref{asuff2}. 
\begin{Assumption}\label{asuffstm}
{Let $\beta \in \{1/2,1\}$ be given}. There exist positive constants $\kappa$ and $\alpha$ with
\begin{equation}\label{growthasuffstm}
J'(\bar u)(u-\bar u)+\beta J''(\bar u)(u-\bar u)^2\geq \kappa\|z_{\bar u,u-\bar u}\|_{L^2(\Omega)}\|u-\bar u\|_{L^1(\Omega)}
\end{equation}
for all $u \in \mathcal U$ with $\|y_u-y_{\bar u}\|_{C(\bar \Omega)}<\alpha$.
\end{Assumption}

\begin{Assumption}\label{asuffst}
{Let $\beta\in\{1/2,1\}$ be given}. There exist positive constants $\kappa$ and $\alpha$ with
\begin{equation}\label{growthasuffst}
J'(\bar u)(u-\bar u)+\beta J''(\bar u)(u-\bar u)^2\geq \kappa\|z_{\bar u,u-\bar u}\|_{L^2(\Omega)}^{2}
\end{equation}
for all $u \in \mathcal U$ with $\|y_u-y_{\bar u}\|_{C(\bar \Omega)}<\alpha$.
\end{Assumption}

Assumption \ref{asuffst}, the weakest of the three assumptions and does not imply the bang-bang property of the optimal controls. Assumption \ref{asuffst} is especially interesting as it is the weakest assumption so far that allows for solution stability estimates of the optimal states, see \cite{CDJ2023}. The interest of Assumption \ref{asuffstm} stems from the fact that it is the weakest assumption so far that still allows for solution stability for the optimal controls, which is also discussed in \cite{CDJ2023}. Further, in \cite{CDJ2023}, it was conjectured that Assumption \ref{asuffstm} may also be satisfied by optimal controls that are not bang-bang. If $\frac{\partial L_b}{\partial y}=0$, we can answer this negatively in the following proposition.
\begin{Proposition}\label{mixedbangbang}
{Let Assumption \ref{asuffstm} be satisfied and $\frac{\partial L_b}{\partial y}=0$}. 
     Then, $\bar u$ is bang-bang.
\end{Proposition}
\begin{proof}
    Assume that $\bar u$ is not bang-bang and let it satisfy Assumption \ref{asuffstm}. Since $\bar u$ is not bang-bang, there exists a set of positive measure $E\subset \Omega$, such that $\bar H_u=0$ on $E$. Let $v_E$ denote a control with $v_E=\bar u$ on $\Omega\setminus E$ and $\|v_E-\bar u\|_{L^1(\Omega)}<\alpha$. Then the first variation in direction $v_E-\bar u$ is zero and by \eqref{growthasuffstm}, we find 
    \begin{equation}\label{mixh2}
        \beta J''(\bar u)(v_E-\bar u,v_E-\bar u)\geq \kappa\|z_{\bar u,v_E-\bar u}\|_{L^2(\Omega)}\|v_E-\bar u\|_{L^1(\Omega)}.
    \end{equation}
{By the affine structure of the optimal control problem, Assumption \ref{A1}(ii), Assumption \ref{A3}, the fact that $\bar y, \bar \varphi\in C(\bar \Omega)$, the calculations in Theorem \ref{T3.1} and the assumption that $\frac{\partial L_b}{\partial y}=0$},  we infer the existence of a positive constant $c$ independent of the control $v_E$ such that
\begin{equation}\label{mixh1}
    \beta J''(\bar u)(v_E-\bar u,v_E-\bar u)\leq \beta c\|z_{\bar u, v_E-\bar u}\|_{L^2(\Omega)}^2.
\end{equation}
Thus, using \eqref{mixh2} and \eqref{mixh1}, we conclude for all controls $v_E$ with $v_E=\bar u$ on $\Omega\setminus E$
\begin{equation}\label{mixh3}
    \|\bar u-v_E\|_{L^1(\Omega)}\leq \beta c/\kappa\|z_{\bar u, v_E-\bar u}\|_{L^2(\Omega)}.
\end{equation}
 Since $\bar u$ is not bang-bang, we can select an $\varepsilon>0$ and a set of positive measure {$\hat E$} such that $\bar u(x) \in [u_a(x)+\varepsilon,u_b(x)-\varepsilon ]$ for a.e. {$x\in \hat E$}. Now consider a sequence $\{v^k_{\varepsilon}\}_{k=1}^\infty$ with $v^k_{\varepsilon}\in \{-\varepsilon,\varepsilon\}$ a.e. on $\Omega$ and $v^k_{\varepsilon}\rightharpoonup^* 0$ in $L^\infty(\Omega)$ for $k\to \infty$. Finally, define a sequence $\{\bar v_\varepsilon^k\}_{k=1}^\infty$ by $\bar v_\varepsilon^k:=\bar u$ on {$\Omega\setminus\hat E$} and $\bar v_\varepsilon^k:=\bar u+v^k_\varepsilon$ on {$\hat E$}. It is clear that $\bar v_\varepsilon^k\rightharpoonup^* \bar u$ in $L^\infty(\Omega)$ and $\|\bar v_\varepsilon^k-\bar u\|_{L^1(\Omega)}=\varepsilon \vert \hat E \vert$ for all $k\in \mathbb N$. On the other hand, by Theorem \ref{contandregsem}, $\bar v_\varepsilon^k\rightharpoonup^* \bar u$ in $L^\infty(\Omega)$ implies $\|z_{\bar u, \bar v_\varepsilon^k-\bar u}\|_{L^2(\Omega)}\to 0$ as $k\to \infty$.  This contradicts \eqref{mixh3}.
\end{proof}
Consequently, the notion of strong or weak local minimizer is equivalent under Assumption~\ref{asuff2} and Assumption~\ref{asuffstm}.
\begin{Lemma}\label{equiconstat}
{Let $\gamma \in (0,1)$ and $\beta\in \{1/2,1\}$ be given}. It is equivalent:
\begin{enumerate}
\item There exist positive constants $\kappa$ and $\alpha$ such that
\begin{equation}\label{contasum}
J'(\bar u)(u-\bar u)+\beta J''(\bar u)(u-\bar u)^2\geq \kappa\| u-\bar u\|_{L^1(\Omega)}^{1+\frac{1}{\gamma}}
\end{equation}
for all $u \in \mathcal U$ with $\|u-\bar u\|_{L^1(\Omega)}<\alpha$.
\item There exist positive constants $c$ and $\alpha$ such that \eqref{contasum} holds
for all $u \in \mathcal U$\\ with $\|y_u-y_{\bar u}\|_{C(\bar \Omega)}<\alpha$.
\end{enumerate}
Further, if the objective integrand satisfies $\frac{\partial L_b}{\partial y}=0$ it is equivalent
\begin{enumerate}
\item There exist positive constants $\kappa$ and $\alpha$ such that
\begin{equation}\label{contasumm}
J'(\bar u)(u-\bar u)+\beta J''(\bar u)(u-\bar u)^2\geq \kappa\| u-\bar u\|_{L^1(\Omega)}\|z_{\bar u,u-\bar u}\|_{L^2(\Omega)}
\end{equation}
for all $u \in \mathcal U$ with $\|u-\bar u\|_{L^1(\Omega)}<\alpha$.
\item There exist positive constants $\kappa$ and $\alpha$ such that \eqref{contasumm} holds
for all $u \in \mathcal U$\\ with $\|y_u-y_{\bar u}\|_{C(\bar \Omega)}<\alpha$.
\end{enumerate}
\end{Lemma}
\begin{proof}
The statement of Lemma \ref{equiconstat} for Assumption \ref{asuff2} with $\gamma=1$ was proven in  \cite[Proposition 5.2]{CDJ2023}. The proof relies on the fact that  \eqref{contasum} implies the bang-bang structure of $\bar u$. But if $\gamma \in (0,1)$, \eqref{contasum} still implies the bang-bang structure and the arguments in \cite[Proposition 5.2]{CDJ2023} hold true for $\gamma \in (0,1)$. By Proposition \ref{mixedbangbang}, the Assumption \ref{asuffstm} implies the control $\bar u$ to be bang-bang, thus the results can be obtained by the arguments as in \cite[Proposition 5.2]{CDJ2023}.
\end{proof}

The next lemmas are needed for the estimations later on. Their well-known statement was proven for objective functionals with varying generality for the case $\gamma=1$, \cite[Lemma 11]{DJV2022}. The proof for $\gamma\in (2/(2+n),1)$ follows by the same arguments. For Lemma \ref{estfv2} below, see for instance \cite[Lemma 2.7]{Casas2012}.
\begin{Lemma}\label{estfv}
Given $\gamma\in (2/(2+n),1]$ and $\bar u, u\in \mathcal U$. Define $u_\theta:=\bar u+\theta (u-\bar u) $ for some {measurable function $\theta$ with $0\leq \theta(x)\leq 1$}. For all $\epsilon >0$ there exists $\delta>0$ such that
\begin{equation*}
\Big \vert J''(\bar u)(u-\bar u)^2 -J''( u_\theta)(u-\bar u)^2\Big \vert\leq \epsilon \|u-\bar u\|_{L^1(\Omega)}^{1+\frac{1}{\gamma}}
\end{equation*}
for all $\|u-\bar u\|_{L^1(\Omega)}<\delta$.
\end{Lemma}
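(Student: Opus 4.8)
The plan is to expand the difference with the second–derivative formula of Theorem \ref{T3.1}, then freeze either the coefficients or the linearised states and balance powers of $\|u-\bar u\|_{L^1(\Omega)}$ against the elliptic estimates of Section \ref{Auxpde}. Write $v:=u-\bar u$, so $u_\theta-\bar u=\theta v$, and for $w\in\Uad$ put
\[
\psi_w:=\frac{\partial^2L}{\partial y^2}(\cdot,y_w,w)-p_w\,\frac{\partial^2f}{\partial y^2}(\cdot,y_w),\qquad
\chi_w:=\frac{\partial^2L}{\partial u\partial y}(\cdot,y_w,w),
\]
so that $J''(w)v^2=\int_\Omega\psi_w\,z_{w,v}^2\dx+2\int_\Omega\chi_w\,z_{w,v}\,v\dx$ by Theorem \ref{T3.1}. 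Adding and subtracting $\psi_{u_\theta}z_{\bar u,v}^2$ and $\chi_{u_\theta}z_{\bar u,v}v$ I would split $J''(\bar u)v^2-J''(u_\theta)v^2$ into
\[
\int_\Omega(\psi_{\bar u}-\psi_{u_\theta})z_{\bar u,v}^2\dx
+\int_\Omega\psi_{u_\theta}\bigl(z_{\bar u,v}^2-z_{u_\theta,v}^2\bigr)\dx
+2\int_\Omega(\chi_{\bar u}-\chi_{u_\theta})z_{\bar u,v}\,v\dx
+2\int_\Omega\chi_{u_\theta}\bigl(z_{\bar u,v}-z_{u_\theta,v}\bigr)v\dx,
\]
and estimate each of the four integrals: in each one either the coefficient is frozen and the linearised states are compared, or the linearised state is frozen and the coefficients are compared.

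First I would record the uniform bounds that make this work. Since $\Uad$ is bounded in $L^\infty(\Omega)$, the regularity and continuous–dependence results for \eqref{see1} of Section \ref{Auxpde} and the adjoint equation \eqref{E3.6} give a constant $M$ with $\|y_w\|_{C(\bar\Omega)}+\|p_w\|_{C(\bar\Omega)}\le M$ for all $w\in\Uad$, so $\psi_{u_\theta},\chi_{u_\theta}$ are bounded in $L^\infty(\Omega)$ uniformly in $\theta$ and $u$. Moreover $\|u_\theta-\bar u\|_{L^1(\Omega)}=\theta\|v\|_{L^1(\Omega)}\le\|v\|_{L^1(\Omega)}$ and, interpolating against the $L^\infty$-bound, $\|u_\theta-\bar u\|_{L^r(\Omega)}\le(2M)^{1-1/r}\|v\|_{L^1(\Omega)}^{1/r}$ for any fixed $r>n/2$; feeding this into the continuous dependence statement for \eqref{see1} and into \eqref{E3.6}, and using that $\partial f/\partial y,\partial L/\partial y$ are locally Lipschitz in $y$, yields $\|y_{u_\theta}-y_{\bar u}\|_{C(\bar\Omega)}+\|p_{u_\theta}-p_{\bar u}\|_{C(\bar\Omega)}\le C\|v\|_{L^1(\Omega)}^{1/r}$, uniformly in $\theta\in[0,1]$ and $u\in\Uad$. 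From Lemma \ref{L2.2} one also gets $\|z_{\bar u,v}\|_{L^2(\Omega)}\le M_2\|v\|_{L^1(\Omega)}$ (via \eqref{E2.9}) and, combining \eqref{E2.12} with \eqref{E2.3}, $\|y_{u_\theta}-y_{\bar u}\|_{L^2(\Omega)}\le C\|v\|_{L^1(\Omega)}$ after absorbing the quadratic remainder.

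For the second and fourth integrals (frozen coefficient) I would use Lemma \ref{L2.3}: by \eqref{E2.13}, $\|z_{u_\theta,v}-z_{\bar u,v}\|_{L^2(\Omega)}\le C\|y_{u_\theta}-y_{\bar u}\|_{L^2(\Omega)}\|z_{\bar u,v}\|_{L^2(\Omega)}\le C\|v\|_{L^1(\Omega)}^2$, while for $\|v\|_{L^1(\Omega)}$ small \eqref{E2.15} gives $\|z_{u_\theta,v}\|_{L^2(\Omega)}\le\tfrac32\|z_{\bar u,v}\|_{L^2(\Omega)}$; factoring $z_{\bar u,v}^2-z_{u_\theta,v}^2=(z_{\bar u,v}-z_{u_\theta,v})(z_{\bar u,v}+z_{u_\theta,v})$ and using the $L^\infty$-bounds on $\psi_{u_\theta},\chi_{u_\theta}$, the second integral is $O(\|v\|_{L^1(\Omega)}^3)$ and the fourth, carrying the extra factor $v$ (and hence $\|v\|_{L^2(\Omega)}\le C\|v\|_{L^1(\Omega)}^{1/2}$ or the $C(\bar\Omega)$-version of \eqref{E2.13}), is of the same lower order. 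For the first integral the coefficient difference $\psi_{\bar u}-\psi_{u_\theta}$ is controlled through the local Lipschitz dependence of $\psi_w$ on $(p_w,w)$ and the equicontinuity of $\partial^2f/\partial y^2,\partial^2L/\partial y^2$ in $y$ (Assumptions \ref{A1}(ii), \ref{A3}), composed with the bounds on $\|y_{u_\theta}-y_{\bar u}\|_{C(\bar\Omega)}$, $\|p_{u_\theta}-p_{\bar u}\|_{C(\bar\Omega)}$ above, so that $\|\psi_{\bar u}-\psi_{u_\theta}\|_{L^\infty(\Omega)}\le\eta(\|v\|_{L^1(\Omega)})$ with $\eta(t)\to0$; pairing with $\|z_{\bar u,v}\|_{L^2(\Omega)}^2\le M_2^2\|v\|_{L^1(\Omega)}^2$ gives $\eta(\|v\|_{L^1(\Omega)})M_2^2\|v\|_{L^1(\Omega)}^2$, and likewise for the third integral.

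The delicate point — and the only place the range $\gamma\in(n/(n+2),1]$ is used — is matching these bounds against $\|v\|_{L^1(\Omega)}^{1+1/\gamma}$. For $\gamma=1$ nothing beyond $\eta(t)\to0$ is needed: the leading bound is exactly $\eta(t)M_2^2t^{2}=\eta(t)M_2^2t^{1+1/\gamma}$, and the other terms carry the strictly larger exponents $3$ and $\ge 5/2$. For $\gamma<1$ one must in addition keep the \emph{rate} of $\eta$: with $r>n/2$ chosen close to $n/2$, the quantitative version of the continuity in Assumptions \ref{A1}--\ref{A3} gives $\eta(t)\le Ct^{1/r}$, whence the first and third integrals are $O(t^{2+1/r})$, and $\gamma>n/(n+2)$ is precisely what guarantees $r/(r+1)<\gamma$ for $r$ close enough to $n/2$, i.e. $2+1/r>1+1/\gamma$; then for $\|v\|_{L^1(\Omega)}<\delta$ small all four terms are $\le\epsilon\|v\|_{L^1(\Omega)}^{1+1/\gamma}$, also using $1+1/\gamma<1+(n+2)/n\le 3$ for the $O(t^3)$ and $O(t^{5/2})$ contributions. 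I expect the main obstacle to be exactly this bookkeeping of powers of $\|u-\bar u\|_{L^1(\Omega)}$, together with the need to convert the qualitative equicontinuity of the second derivatives into a genuine rate via the elliptic smoothing $L^1(\Omega)\hookrightarrow L^r(\Omega)$, $z_{\bar u,v}\in C(\bar\Omega)$ of Theorem \ref{estlincont} and Theorem \ref{T2.2}; every individual estimate is otherwise routine.
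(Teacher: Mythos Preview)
The paper does not actually prove this lemma: it calls it ``well known'' for $\gamma=1$ and asserts that ``the proof for $\gamma<1$ follows by exactly the same arguments.'' Your four-term decomposition via Theorem~\ref{T3.1}, combined with the estimates of Section~\ref{Auxpde}, is exactly the standard route in the cited references, and for $\gamma=1$ your sketch is correct and matches what the paper has in mind.

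For $\gamma<1$ there are two genuine gaps in your argument. First, Assumptions~\ref{A1}(ii) and~\ref{A3} give only \emph{uniform} continuity of $\partial^2 f/\partial y^2$ and $\partial^2 L/\partial y^2$ in $y$, not a Lipschitz or H\"older modulus. Elliptic smoothing indeed produces the rate $\|y_{u_\theta}-y_{\bar u}\|_{C(\bar\Omega)}\le Ct^{1/r}$, but composing a merely equicontinuous map with this does \emph{not} yield your claimed $\eta(t)\le Ct^{1/r}$; you need the Lipschitz hypothesis on second derivatives, which the paper adds only in Section~\ref{solstab}, not where the lemma is stated. Second, your exponent bookkeeping on the mixed terms is too loose for $n=3$: you assert that $O(t^{5/2})$ contributions are harmless because $1+1/\gamma<3$, but for $n=3$ and $\gamma$ near $n/(n+2)=3/5$ one has $1+1/\gamma$ close to $8/3>5/2$, so an $O(t^{5/2})$ bound is \emph{not} dominated by $\epsilon t^{1+1/\gamma}$. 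The same issue hits the explicit $u$-dependence hidden in $\psi_w$ (through $\partial^2 L_b/\partial y^2\cdot w$): pairing $|v|$ against $z_{\bar u,v}^2$ via $\|z_{\bar u,v}\|_{L^\infty}^2\|v\|_{L^1}$ gives at best $t^{1+2/r}$, which for $r>3/2$ caps below $t^{7/3}<t^{8/3}$. To cover the full range $\gamma\in(n/(n+2),1]$ in dimension $n=3$ these cross terms need either sharper estimates or the structural hypothesis $\partial L_b/\partial y=0$ that the paper itself invokes later (cf.\ the distinction between \eqref{erlipc} and \eqref{erlipc2}); the paper's one-line claim that ``the same arguments'' suffice is itself somewhat optimistic here.
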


\begin{Lemma}\label{estfv2}
Given $\bar u, u\in \mathcal U$. Let $\frac{\partial^2 L}{\partial uy}=0$ and define $u_\theta:=\bar u+\theta (u-\bar u) $ for some {measurable function $\theta$ with $0\leq \theta(x)\leq 1$}. For all $\epsilon >0$ there exists $\delta>0$ such that
\begin{equation*}
\Big \vert J''(\bar u)(u-\bar u)^2 -J''( u_\theta)(u-\bar u)^2\Big \vert\leq \epsilon \|z_{\bar u,u-\bar u}\|_{L^2(\Omega)}^{2}
\end{equation*}
for all $\|y_u-y_{\bar u}\|_{C(\bar \Omega)}<\delta$.
\end{Lemma}
As a consequence of the lemmas \ref{estfv} and \ref{estfv2}, we obtain strict local optimality under Assumptions \ref{asuff2}, \ref{asuffstm} and \ref{asuffst}.
\begin{Theorem}\label{eqhalf}
Let $\bar u\in \mathcal U$ be given and  $\beta\in \{1/2,1\}$ in the assumptions reference below.
\begin{enumerate}
    \item Let Assumption \ref{asuff2} hold. Then there exists positive constants $\kappa$ and $\alpha$ such that
    \begin{equation}\label{coer1}
    J(u)-J(\bar u)\geq \kappa\|u-\bar u\|_{L^1(\Omega)}^{1+\frac{1}{\gamma}},
    \end{equation}
    for all $u\in \mathcal U$ with $\|u-\bar u\|_{L^1(\Omega)}<\alpha$.
        \item Let Assumption \ref{asuffst} hold for $\bar u\in \mathcal U$. There exist positive constants $\kappa$ and $\alpha$ such that
        \begin{equation}\label{statcoer}
        J(u)-J(\bar u)\geq \kappa\|z_{\bar u,u-\bar u}\|_{L^2(\Omega)}^{2}
        \end{equation}
        for all $u \in \mathcal U$ with $\|y_{\bar u}-y_u\|_{C(\bar \Omega)}<\alpha$.
       \item Let Assumption \ref{asuffstm} hold for $\bar u\in \mathcal U$. Then there exist positive constants $\kappa$ and $\alpha$ such that
    \begin{equation}\label{mixedvarcoer}
        J(u)-J(\bar u)\geq \kappa\|z_{\bar u,u-\bar u}\|_{L^2(\Omega)}\|\bar u-u\|_{L^1(\Omega)}
        \end{equation}
        for all $u \in \mathcal U$ with $\|y_{\bar u}-y_u\|_{C(\bar \Omega)}<\alpha$.
    \end{enumerate}
\end{Theorem}
\begin{proof}
    The statements with the growths \eqref{statcoer} and \eqref{mixedvarcoer} were proved in \cite{CDJ2023}. The statement for \eqref{coer1} follows by the same arguments using Lemma \ref{estfv}.
\end{proof}
\begin{Remark}
Assumption \ref{asuff2} with $\gamma\in (2/(n+2),1]$, together with Lemma \ref{estfv} is used to guarantee the existence of positive constants $\kappa$ and $\alpha$ such that
\begin{equation}\label{grwothrem}
J(u)-J(\bar u) \geq \kappa\|u-\bar u\|_{L^1(\Omega)}^{1+\frac{1}{\gamma}}, \ \text{for all } u\in \mathcal{U} \text{ with } \|u-\bar u\|_{L^1(\Omega)}<\alpha.
\end{equation}
If {$\frac{\partial^2 L}{\partial y\partial u}=\frac{\partial L_b}{\partial y}=0$} holds for the objective integrand, as a consequence of Lemma \ref{estfv2}, \eqref{grwothrem} can be obtained by considering Assumption \ref{asuff2}$(\beta=1/2,\ \gamma \in (0,1])$ together with Assumption \ref{asuffst}$(\beta=1/2)$.

To see this, let $\kappa$ and $\alpha$ be positive constants for that Assumption \ref{asuff2} and \ref{asuffst} are satisfied simultaneously. Then, if $\|\bar u_h-\bar u\|_{L^1(\Omega)}$ is sufficiently small, {applying Taylor's theorem, Assumption \ref{A3}, and Lemma \ref{estfv2}} yields 
\begin{align*}
    J(u)&=J(\bar u) +J'(\bar u)(u-\bar u)+1/2J''(\bar u_\theta)(u-\bar u)^2\geq J(\bar u) +1/2J'(\bar u)(u-\bar u)+1/4J''(\bar u)(u-\bar u)^2\\
    &+1/2J'(\bar u)(u-\bar u)+1/4J''(\bar u)(u-\bar u)^2-1/2\Big \vert J''(\bar u_\theta)(u-\bar u)^2-J''(\bar u)(u-\bar u)^2\Big\vert\\
    &\geq \kappa/2 \|u-\bar u\|_{L^1(\Omega)}^{1+\frac{1}{\gamma}}+\kappa/4 \|z_{\bar u,u-\bar u}\|_{L^2(\Omega)}^2.
\end{align*}
Thus, the constraint {$\gamma\in (2/(2+n),1]$} can be weakened to $\gamma \in (0,1]$ for the cost of making both, Assumption \ref{asuff2} and \ref{asuffst} at the same time.
\end{Remark}

\subsection{A short comparison with growth-related conditions in the literature}
We provide a short discussion of the relationship of Assumptions \ref{asuff2}, \ref{asuffstm} and \ref{asuffst} and the by now classical assumptions used for the analysis of affine PDE-constrained optimal control problems in the literature. By classical assumptions, we understand the ones considered, for instance, in \cite{Casas2012,CM2021, CMR2021b,CWW2018}.
For this, let us define cones appearing in affine PDE-constrained optimal control. 
\begin{Definition}
We consider the set
\begin{align}
\Big\{v\in L^2(\Omega)\Big \vert \  v\geq 0\text{ a.e. on } [\bar u =u_a]\text{ and } v\leq 0 \text{ a.e. on } [\bar u =u_b]\Big\}.
\label{sign}
\end{align}
Given $\tau>0$, we define the sets
\begin{align*}
D^{\tau}_{\bar u}&:=
\Big\{v\in L^2(\Omega)\Big \vert\   v\text{ satisfies }\eqref{sign}\text{ and } v(x)=
0\text{ if }\Big\vert  \frac{\partial \bar H}{\partial u}(x)\Big \vert   >\tau\Big\},\\
G^{\tau}_{\bar u}&:=
\Big\{v\in L^2(\Omega)\Big \vert\ v\text{ satisfies }\eqref{sign}\text{ and } J'(\bar u)(v)\leq \tau \|z_{\bar u,v} \|_{L^1(\Omega)}\Big\},\\
C^\tau_{\bar u}&:=D^{\tau}_{\bar u}\cap G^{\tau}_{\bar u}.
\end{align*}
Here, $\bar H$ denotes the Hamiltonian \eqref{Hamil} corresponding to the control $\bar u$, that is $\bar H(x):=H(x, \bar y (x),  \bar \varphi (x), \bar u(x))$.
\end{Definition}
Usually, the following two assumptions are made for the analysis of affine problems.
The first is the structural assumption on the switching function: There exist positive constants $c$ and $\gamma \in (0,1]$ such that 
\begin{equation}\label{struct}
    \vert \{ x\in \Omega \vert\ \vert \bar H_u\vert \leq \varepsilon\}\vert \leq c \varepsilon^{\gamma}.
\end{equation}
It is well known that this assumption implies for a possible different constant $c$ that
\begin{equation}\label{grfir}
    J'(\bar u)(u-\bar u)\geq c \|u-\bar u\|_{L^1(\Omega)}^{1+1/\gamma}\ \ \text{ for all } u\in \mathcal{U}.
\end{equation}
The second assumption is the so-called second-order sufficient condition
\begin{equation}\label{sosc}
    J''(\bar u)(u-\bar u)^2\geq c \| z_{\bar u,u-\bar u}\|_{L^2(\Omega)}^2 \ \text{ for all } u\in \mathcal{U} \text{ with } (u-\bar u)\in C^\tau_{\bar u}.
\end{equation}
Utilizing \eqref{struct} and \eqref{sosc}, error estimates for the numerical approximation are provided in \cite{CM2021}.

To compare these assumptions with the one used in this paper, we first notice that it is equivalent to consider Assumption \ref{asuff2} only for $u\in \mathcal U$ with $(u-\bar u)\in D^\tau_{\bar u}$, see \cite[Proposition 6.2]{DJV2022} for elliptic problems and for parabolic problems see \cite[Corollary 14]{CDJ2023}.
Further, we have the following theorem that relates Assumption \ref{asuff2} to \eqref{struct}, \eqref{grfir} and \eqref{sosc}.
\begin{Theorem}\label{compasuff}
Let $\frac{\partial L_b}{\partial y}=0$ and let there exist positive constants $c, k$ and $\alpha$ with $k<c$ such that
\begin{equation*}
    J'(\bar u)(u-\bar u)\geq c\|u-\bar u\|^{1+\frac{1}{\gamma}}_{L^1(\Omega)} \text{ for all }u\in \mathcal{U},
\end{equation*}
and
\begin{equation*}
    J''(\bar u)(u-\bar u)\geq -k\|u-\bar u\|_{L^1(\Omega)}^{1+\frac{1}{\gamma}} \text{ for all } (u-\bar u)\in C^\tau_{\bar u} \text{ with } \|u-\bar u\|_{L^1(\Omega)}<\alpha.
\end{equation*}
Then Assumption \ref{asuff2}, $\beta\in \{1/2,1\}$, holds for some appropriate constants. 
Further, let $\frac{\partial L_b}{\partial y}=0$. Then condition \eqref{sosc} implies Assumption \ref{asuffst}.
\end{Theorem}
\begin{proof}
   It is sufficient to prove the statement for the Assumption \ref{asuff2} on the cone $D^\tau_{\bar u}$. Thus, we only need to consider the case $(u-\bar u)\notin G^\tau_{\bar u}$.
But by definition of $(u-\bar u)\notin G^\tau_{\bar u}$, $J'(\bar u)(u-\bar u)>\tau \|z_{\bar u,u-\bar u}\|_{L^1(\Omega)}$. Using Theorem \ref{T3.1}, it is straight forward to  estimate for some constant $d$ independent of $u$
    \begin{equation*}
        \Big \vert J''(\bar u)(u-\bar u)^2\Big\vert \leq d\|z_{\bar u,u-\bar u}\|_{L^\infty(\Omega)}\|z_{\bar u,u-\bar u}\|_{L^1(\Omega)}.
    \end{equation*}
    By the assumption of this theorem, it also holds
    \begin{equation*}
    J'(\bar u)(u-\bar u)\geq c\|u-\bar u\|^{1+\frac{1}{\gamma}}_{L^1(\Omega)}.
\end{equation*}
Thus combining the estimates we obtain for $\|u-\bar u\|_{L^1(\Omega)}$ sufficiently small
\begin{align*}
&J'(\bar u)(u-\bar u)+J''(\bar u)(u-\bar u)^2\geq\frac{1}{2}J'(\bar u)(u-\bar u)+(\frac{1}{2}\tau -d\|z_{\bar u,u-\bar u}\|_{L^\infty(\Omega)})\|z_{\bar u,u-\bar u}\|_{L^1(\Omega)}\\
    &\geq c/2\|u-\bar u\|^{1+\frac{1}{\gamma}}_{L^1(\Omega)}+(\frac{1}{2}\tau -d\|z_{\bar u,u-\bar u}\|_{L^\infty(\Omega)})\|z_{\bar u,u-\bar u}\|_{L^1(\Omega)}\geq c/2\|u-\bar u\|^{1+\frac{1}{\gamma}}_{L^1(\Omega)}.
\end{align*}
The claim regarding Assumption \ref{asuffst} is straightforwardly obtained by similar arguments.
\end{proof}

\section{Discrete model and error estimates}\label{discmee}
We come to the main part of this manuscript. The goal is to prove error estimates for the numerical approximation under Assumption \ref{asuff2} for $\gamma\in (2/(n+2),1]$ and Assumptions \ref{asuffstm} and \ref{asuffst}.
\subsection{The finite element scheme} The finite element scheme we consider is close to the one in \cite{CM2021}; we also refer to \cite{BS2002} for an overview of the finite element method.
In this section, we assume $\Omega$ to be convex, see \cite[Section 5.2]{RT1983}. Let $\{\tau_h\}_{h>0}$ be a quasi-uniform family of triangulations of $\bar \Omega $. That is, for each $T\in \tau_h$, $\rho(T)$ denotes the diameter of $T$, and $\sigma(T)$ denotes the diameter of the largest ball inscribed in $T$. The mesh size is defined by $h:=\max_{T\in \tau_h}\rho(T)$. We assume that there exist two positive constants $\tilde \sigma$ and $ \tilde \rho$ such that
\begin{equation}
    \frac{\rho(T)}{\sigma(T)}\leq \tilde \sigma \text{ and } \frac{h}{\rho(T)} \leq \tilde \rho,
\end{equation}
for all $T\in \tau_h$ and all $h>0$.
Denote $\bar \Omega_h=\cup_{T\in\tau_h}T$ and {define $\Omega_h:=\text{int} \bar \Omega_h$},
and assume that every boundary node of $\Omega_h$ is a point of $\Gamma$. Suppose that there exists a constant $C_\Gamma>0$ independent of $h$ such that the distance $d_{\Gamma}$ satisfies $d_{\Gamma}(x)<C_{\Gamma}h^2$ for every $x\in \Gamma_h=\partial \Omega_h$. As a consequence, we infer the existence of a constant $C_\Omega>0$ independent of $h$ such that 
\begin{equation}\label{distset}
\vert \Omega\setminus \Omega_h\vert \leq C_{\Omega}h^2,
\end{equation}
where $\vert \cdot\vert $ denotes the Lebesgue measure, see \cite[(5.2.19)]{RT1983}. We define the finite-dimensional space 
\begin{equation*}
Y_h=\{z_h\in C(\bar \Omega): z_{h\vert T}\in P_1(T)\ \forall T\in \tau_h \ \textrm{ and } z_h\equiv 0\text{ on }\Omega \setminus \Omega_h \},
\end{equation*}
where $P_1(T)$ denotes the polynomials in $T$ of degree at most $1$.\\
For  $u\in L^2(\Omega)$, the associated discrete state is the unique element $y_h(u)\in Y_h$ that solves 
\begin{equation}
\label{disceq}
a(y_h,z_h)+\int_{\Omega_h}f(x,y_h)z_h\dx=\int_{\Omega_h}u z_h\dx \ \ \forall z_h\in Y_h,
\end{equation}
where 
\begin{equation*}
a(y,z)=\sum_{i,j=1}^n\int_\Omega a_{ij}\partial_{x_i}y\partial_{x_j}z\dx \ \ \forall y,z \in H^1(\Omega).
\end{equation*}
The proof of the existence and uniqueness of a solution for \eqref{disceq} is standard; see, for instance, \cite{CM2002}.
\begin{Lemma}\cite[Lemma 3]{CM2021}.
\label{discest}There exists a constant $c>0$, depending on the data of the problem but independent of the discretization parameter $h$, s. t. for every $u\in \mathcal U$
\begin{equation}\label{estforstate2}
\| y_h(u)-y_u\|_{L^2(\Omega)}\leq ch^2,
\end{equation}
\begin{equation}\label{estforstatelog}
\| y_h(u)-y_u\|_{L^\infty(\Omega)}\leq ch^2\vert \log h\vert^2.
\end{equation}
\end{Lemma}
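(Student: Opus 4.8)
The statement is the classical finite element error estimate for the semilinear state equation, and the plan is to reduce it to linear finite element theory by inserting an intermediate discrete function. First I would record the uniform regularity of the exact state: since $\mathcal U$ is bounded in $L^\infty(\Omega)$ and $\{a_{i,j}\}\in C^{0,1}(\bar\Omega)$, the regularity theorem cited above gives $y_u\in W^{2,r}(\Omega)$ for every finite $r$ with $\|y_u\|_{W^{2,r}(\Omega)}\le M_0\, r\big(\|u\|_{L^\infty(\Omega)}+\|f(\cdot,0)\|_{L^\infty(\Omega)}\big)$, a bound uniform over $u\in\mathcal U$ that grows only linearly in $r$; this linear growth is what will turn a power of $h$ into a logarithmic factor once $r$ is taken of order $|\log h|$. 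I would also note that $\|y_h(u)\|_{L^\infty(\Omega)}$ is bounded uniformly in $h$ and $u\in\mathcal U$, which follows from testing \eqref{disceq} with $y_h(u)$ and with discrete truncations, using $\tfrac{\partial f}{\partial y}\ge0$ and $f(\cdot,0)\in L^\infty(\Omega)$.

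Next I would introduce $\hat y_h\in Y_h$, the solution of the linear discrete problem $a(\hat y_h,z_h)+\int_{\Omega_h}f(x,y_u)z_h\dx=\int_{\Omega_h}u\,z_h\dx$ for all $z_h\in Y_h$; this is the Galerkin approximation on $\Omega_h$ of the linear equation $\mathcal A y=u-f(\cdot,y_u)$ whose exact solution is $y_u$. Standard duality on $\Omega_h$ together with the $W^{2,2}$-interpolation estimate and the measure bound $|\Omega\setminus\Omega_h|\le C_\Omega h^2$ of \eqref{distset} gives $\|\hat y_h-y_u\|_{L^2(\Omega)}\le ch^2\|y_u\|_{H^2(\Omega)}$. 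For the maximum norm I would use the $|\log h|$-weighted stability of piecewise linear elements on quasi-uniform meshes, $\|\hat y_h-y_u\|_{L^\infty(\Omega_h)}\le c\,|\log h|\,\inf_{v_h\in Y_h}\|y_u-v_h\|_{L^\infty(\Omega_h)}$, bound the infimum by the nodal interpolation error $ch^{2-n/r}\|y_u\|_{W^{2,r}(\Omega)}$, and choose $r\sim|\log h|$ so that $h^{-n/r}$ stays bounded while $\|y_u\|_{W^{2,r}(\Omega)}$ contributes an extra $|\log h|$; together with the thin boundary layer $\Omega\setminus\Omega_h$ this yields $\|\hat y_h-y_u\|_{L^\infty(\Omega)}\le ch^2|\log h|^2$.

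It then remains to control the nonlinear defect $e_h:=y_h(u)-\hat y_h$. Subtracting the two discrete equations gives $a(e_h,z_h)+\int_{\Omega_h}\big(f(x,y_h(u))-f(x,y_u)\big)z_h\dx=0$, and writing $f(x,y_h(u))-f(x,y_u)=d_h(x)\,(y_h(u)-y_u)$ with $d_h(x):=\int_0^1\tfrac{\partial f}{\partial y}\big(x,y_u+t(y_h(u)-y_u)\big)\dt$, which is nonnegative and bounded by the uniform $L^\infty$-bound above, this becomes $a(e_h,z_h)+\int_{\Omega_h}d_h\,e_h z_h\dx=-\int_{\Omega_h}d_h(\hat y_h-y_u)z_h\dx$. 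Testing with $z_h=e_h$ and using the ellipticity in Assumption \ref{A1}(i) together with $d_h\ge0$ bounds $\|e_h\|_{H^1_0(\Omega)}$, hence $\|e_h\|_{L^2(\Omega)}$, by $c\|\hat y_h-y_u\|_{L^2(\Omega)}\le ch^2$; viewing $e_h$ instead as the discrete solution of a linear equation of the form \eqref{lineq} with right-hand side $-d_h(\hat y_h-y_u)\in L^\infty(\Omega)$ and invoking the discrete counterpart of Theorem \ref{estlincont} ($L^\infty$–$L^q$ stability for some $q>n/2$) together with the $W^{2,q}$ error estimate $\|\hat y_h-y_u\|_{L^q(\Omega)}\le ch^2$ gives $\|e_h\|_{L^\infty(\Omega)}\le ch^2$. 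The two asserted bounds then follow from $\|y_h(u)-y_u\|_X\le\|e_h\|_X+\|\hat y_h-y_u\|_X$ for $X=L^2(\Omega)$ and $X=L^\infty(\Omega)$.

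The hard part will be the linear maximum-norm analysis of the second step: establishing the $|\log h|$-weighted stability of the discrete Green operator on the quasi-uniform, polygonally approximated domain $\Omega_h$, and bookkeeping how the domain perturbation $\Omega\setminus\Omega_h$ and the linear-in-$r$ growth of $\|y_u\|_{W^{2,r}(\Omega)}$ conspire to the stated power $h^2|\log h|^2$; everything else, including the nonlinear correction, is soft and uses only the monotonicity of $f$ and the ellipticity of $\mathcal A$. As this is \cite[Lemma 3]{CM2021}, one may of course simply cite it.
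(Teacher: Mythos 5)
The paper offers no proof of this lemma --- it is quoted verbatim as \cite[Lemma 3]{CM2021} --- and your sketch reproduces the standard argument behind that result: linear Galerkin theory for the frozen-coefficient equation (duality for $L^2$, weighted max-norm stability plus the choice $r\sim|\log h|$ in the $W^{2,r}$ bound for the two logarithmic factors), followed by a monotonicity estimate for the nonlinear defect. The one soft spot is your parenthetical claim that the uniform $L^\infty$ bound on $y_h(u)$ follows by ``testing with discrete truncations'': truncations of piecewise linear functions leave $Y_h$, so this bound has to be obtained differently (e.g.\ via a truncated auxiliary problem or discrete max-norm stability, as in \cite{CM2021}); otherwise the proposal is sound.
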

The set of feasible controls for the discrete problem is given by
\begin{equation*}
U_h:=\{ u_h\in L^\infty(\Omega_h):u_{h\vert T}\in P_0(T)\ \forall T\in \tau_h\}.
\end{equation*}
By $\Pi_h$ we denote the linear projection onto  $U_h$ in the $L^2(\Omega_h)$ given by
\begin{equation*}
(\Pi_h u)_{\vert T}=\frac{1}{\vert T\vert}\int_T u\dx,\ \ \forall T\in \tau_h.
\end{equation*}
By $u_h\rightharpoonup u$ weak* in $L^\infty(\Omega)$ we mean, as in \cite{CM2021}, that
\begin{equation*}
    \int_{\Omega_h}u_hv\dx\to\int_\Omega uv\dx \ \ \forall \ v\in L^1(\Omega).
\end{equation*}
\begin{Lemma}{\cite[Lemma 4]{CM2021}}\label{dualsob}
Given $1 < p <\infty$ there exists a positive constant $\hat C_p$ that depends on $p$ and $\Omega$ but is independent of $h$ such that
\begin{equation*}
    \|u-\Pi_h u\|_{W^{-1,p}(\Omega_h)}\leq \hat C_p h\| u\|_{L^p(\Omega)}\ \ \forall \ u \in L^p(\Omega).
\end{equation*}
\end{Lemma}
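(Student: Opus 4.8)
This is the standard duality (Aubin--Nitsche type) argument for the negative-norm error of the piecewise-constant $L^2$-projection. Recall that, by definition of the dual norm, for $g\in L^p(\Omega_h)$
\[
\|g\|_{W^{-1,p}(\Omega_h)}=\sup\Big\{\int_{\Omega_h}g\,\phi\dx \ :\ \phi\in W^{1,p'}_0(\Omega_h),\ \|\nabla\phi\|_{L^{p'}(\Omega_h)}\le 1\Big\},
\]
where $p'$ denotes the conjugate exponent of $p$. Fix such a test function $\phi$. The plan is to use the two defining properties of $\Pi_h$: it is the orthogonal projection in $L^2(\Omega_h)$ onto the functions that are constant on each $T\in\tau_h$, and consequently $u-\Pi_h u$ is $L^2(\Omega_h)$-orthogonal to every piecewise-constant function. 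Applying this with the piecewise-constant function $\Pi_h\phi$ gives
\[
\int_{\Omega_h}(u-\Pi_h u)\,\phi\dx=\int_{\Omega_h}(u-\Pi_h u)\,(\phi-\Pi_h\phi)\dx .
\]

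I would then bound the two factors separately. By Jensen's inequality applied to the cell averages, $\|\Pi_h u\|_{L^p(\Omega_h)}\le\|u\|_{L^p(\Omega_h)}\le\|u\|_{L^p(\Omega)}$, whence $\|u-\Pi_h u\|_{L^p(\Omega_h)}\le 2\|u\|_{L^p(\Omega)}$. For the second factor I would invoke the elementwise Poincar\'e--Wirtinger inequality: on each $T\in\tau_h$ the function $\Pi_h\phi|_T$ is the mean value of $\phi$ over $T$, so a scaling argument (pull $T$ back to a reference simplex, apply Poincar\'e--Wirtinger there, push forward) combined with the shape-regularity of the quasi-uniform family $\{\tau_h\}$ yields
\[
\|\phi-\Pi_h\phi\|_{L^{p'}(T)}\le C\,h_T\,\|\nabla\phi\|_{L^{p'}(T)},
\]
with $C$ depending only on $p'$ and the shape-regularity constant. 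Raising to the power $p'$, summing over $T\in\tau_h$, and using $h_T\le h$ gives $\|\phi-\Pi_h\phi\|_{L^{p'}(\Omega_h)}\le C\,h\,\|\nabla\phi\|_{L^{p'}(\Omega_h)}$.

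Combining the two bounds via H\"older's inequality,
\[
\Big|\int_{\Omega_h}(u-\Pi_h u)\,\phi\dx\Big|\le\|u-\Pi_h u\|_{L^p(\Omega_h)}\,\|\phi-\Pi_h\phi\|_{L^{p'}(\Omega_h)}\le 2C\,h\,\|u\|_{L^p(\Omega)}\,\|\nabla\phi\|_{L^{p'}(\Omega_h)},
\]
and taking the supremum over all admissible $\phi$ gives the claim with $K_p:=2C$. The only delicate point is the $h$-independence of the constant $C$ in the elementwise Poincar\'e--Wirtinger estimate; this is precisely where quasi-uniformity (shape-regularity) of the triangulations is used. Everything else is H\"older's inequality and the $L^p$-stability of averaging; in particular the mismatch between $\Omega$ and $\Omega_h$ causes no trouble, since all integrals live on $\Omega_h$ and $\|u\|_{L^p(\Omega_h)}\le\|u\|_{L^p(\Omega)}$. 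If one uses the full $W^{1,p'}_0(\Omega_h)$-norm in the definition of $W^{-1,p}$, one additionally applies Poincar\'e's inequality on the bounded domain $\Omega_h$, whose constant is again uniform in $h$.
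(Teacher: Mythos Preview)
Your argument is correct and is precisely the standard duality (Aubin--Nitsche) proof for this estimate: use the $L^2$-orthogonality of $u-\Pi_h u$ to piecewise constants to replace $\phi$ by $\phi-\Pi_h\phi$, then apply the elementwise Poincar\'e--Wirtinger inequality and H\"older. Note, however, that the paper does not give its own proof of this lemma; it is simply quoted from \cite[Lemma~4]{CM2021}, so there is no alternative approach in the paper to compare against. Your proof is exactly what one would expect to find in the cited reference.
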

We define $J_h(u):=\int_{\Omega_h} L(x,y_h(u),u)\dx$ and $\mathcal U_h:=U_h\cap \mathcal U$. Then the discrete problem is given by
\begin{equation}\label{discprob}
\min_{u_h\in \mathcal U_h}J_h(u_h).
\end{equation}
The set $\mathcal U_h$ is compact and nonempty, and the existence of a 
global solution of \eqref{discprob} follows from standard arguments.
For $u\in L^2(\Omega)$, the  discrete adjoint state $\varphi_h(u)\in Y_h$ is the unique solution of
\begin{equation}\label{discstate}
a(z_h,\varphi_h)+\int_{\Omega_h}\frac{\partial f}{\partial y}(x,y_h(u))\varphi_h z_h\dx=\int_{\Omega_h}\frac{\partial L}{\partial y}(x,y_h(u),u)z_h\dx\ \ \forall z_h\in Y_h.
\end{equation}
Again the proof of the existence and uniqueness of a solution for \eqref{disceq} is standard, see \cite{CM2002}.
One can calculate that $J'_h(u)(v)=\int_{\Omega_h}(L_b(x,y_h(u))+\varphi_h(u))v\dx$.
A local solution of \eqref{discprob} satisfies the variational inequality
\begin{equation*}
J'_h(\bar u_h)(u_h-\bar u_h )\geq 0\ \ \forall u_h\in \mathcal U_h.
\end{equation*}
In the following, similar as in \cite{CM2021}, we identify $u_h=\bar u$ on $\Omega\setminus \Omega_h$.
The existence of a sequence of solutions to the discrete problem that converges to an optimal solution of \eqref{ocp1} is provided in the next theorem.
\begin{Theorem}{\cite[Theorem 6]{CM2021}}
Let $\bar u$ be a strict strong local minimizer of \eqref{ocp1}. Then, there exists a sequence $\{\bar u_h\}_h$ of local minimizes of \eqref{discprob} such that $\bar u_h\rightharpoonup \bar u$ weak* in $L^\infty(\Omega)$.
Moreover, there exists $h_0> 0$ such that
\begin{equation}\label{equation:discconv}
    J_h(\bar u_h) \leq J_h(u_h) \quad \text{for all } u_h \in  \mathcal{U}_h \text{ with } \|y_h(u_h)- y_h(\bar u_h)\|_{L^\infty(\Omega_h)} \leq \rho, \text{ for all } h \leq h_0.
\end{equation}
Conversely, let $\{ \bar u_h\}_h$ be a sequence of local minimizers of \eqref{discprob} satisfying \eqref{equation:discconv} for some
given $\rho >0$ and such that $\bar u_h \rightharpoonup^* \bar u $ in $L^\infty(\Omega)$. Then $\bar u$ is a strong local solution of \eqref{ocp1}
satisfying
\begin{equation}
J (\bar u) \leq J (u) \text{ for all }u \in \mathcal{U} \text{ with } \|y_u -\bar y\|_{L^\infty(\Omega)} < \rho.
\end{equation}
\end{Theorem}
\begin{Remark}
    Let $\bar u\in L^r(\Omega)$ and $u_h\rightharpoonup^* \bar u$ in $L^\infty(\Omega)$. Then $\|y_h(\bar u_h)-y_{\bar u}\|_{L^\infty(\Omega)}\to 0$ as $h\to 0$. This follows trivially since the right-hand side of
    \begin{equation}
        \|y_h(\bar u_h)-y_{\bar u}\|_{L^\infty(\Omega)}\leq \|y_h(\bar u_h)-y_{\bar u_h}\|_{L^\infty(\Omega)}+\|y_{\bar u_h}-y_{\bar u}\|_{L^\infty(\Omega)}
    \end{equation}
    tends to zero for $h\to 0$ due to Theorem \ref{contandregsem} and Lemma \ref{discest}, see also the related  statement in the proof of \cite[Theorem 6]{CM2021}.
\end{Remark}

For the estimations for the variational discretization, we need the following theorem. The proof of Theorem \ref{interadj} is done along the proof of \cite[Theorem 9]{CM2021}\label{varthmest} using the arguments from the proof of \cite[Lemma 3]{CM2021}.
\begin{Theorem}\label{interadj}
Let $\bar u_h $ denote a solution to \eqref{discprob}. We denote by $y_{\bar u_h}$ and $ \varphi_{\bar u_h}$ the solution to the continuous state equation and to the corresponding adjoint equation with respect to $\bar u_h$. By $\varphi_h(\bar u_h)$ we denote the discrete adjoint equation corresponding to $\bar u_h$ and $\varphi^h_{\bar u_h}$ denotes the solution to the following equation
\begin{equation*}
		\left\{ \begin{array}{lll}
		\mathcal A^\ast \varphi+\frac{\partial f}{\partial y}(\cdot, y_h(\bar u_h))\varphi& =\frac{\partial L}{\partial y}(\cdot ,y_h(\bar u_h))\  &\text{ in }\ \Omega,\\
		\varphi&=0\ &\text{ on } \Gamma.\\
	\end{array} \right.
	\end{equation*}
	There exists a positive constant $c$, which depends on the data of the problem but is
independent of the parameter $h$ and a positive number $h_0$ such that for all $h<h_0$ it holds
	\begin{align}
	&\| \varphi_{\bar u_h}-\varphi^h_{\bar u_h}\|_{L^\infty(\Omega)}\leq ch^2,\label{distadj1}\\
	&\| \varphi_h(\bar u_h)-\varphi^h_{\bar u_h}\|_{L^\infty(\Omega)}\leq ch^2 \vert \log h\vert^2.
	\label{distadj2}
	\end{align}
\end{Theorem}
\subsection{Discretization with piece-wise constant controls}\label{point}
The two main goals of this section are to prove that Assumptions \ref{asuff2}, \ref{asuffstm} and \ref{asuffst} allow finite element error estimates and to improve the error estimates in the literature for $\gamma\in (0,1)$. Before stating the main theorems, let us consider two preliminary lemmas.
Let us recall that $\bar H_u(x):=H_u(x, \bar y (x),  \bar \varphi (x), \bar u(x))=\bar \varphi(x)+L_b(x,\bar y(x))$.
The assumption that  $\bar H_u$ is Lipschitz is not a significant constraint for tracking-type objective functionals where $\frac{\partial L_b}{\partial y}=0$. This is because of the assumptions on the control problem in this section; the adjoint state, $\varphi_{\bar u}$, is already Lipschitz continuous. If $ L_b\ne 0$, the significance of the constraints depends on the regularity of $L_b(\cdot,y)$, which comes down to the regularity of the state $y$.
Let us also recall that $\bar \varphi\in W^{2,p}(\Omega)$ as a consequence of the regularity of the domain, the regularity of the coefficients of the elliptic operator, and the boundedness of the right-hand side of the adjoint equation due to the boundedness of the solution to the state equation. Furthermore, the solution to the state equation, $\bar y$ is due to Theorem \ref{contandregsem} in $W^{2,p}(\Omega)$ and thus Lipschitz as well. Finally, due to Assumption \ref{A3}, which gives us the needed regularity of $L_b$, we can infer the Lipschitz continuity of $\bar H_u$.

\begin{Lemma}\label{lipest} Consider a bang-bang control $\bar u\in \mathcal U$ satisfying the first order optimality condition.\\
Then
$ J'(\bar u)(\Pi_h\bar u-\bar u)\leq \mathrm{Lip}_{\bar H_u}h\|\bar u-\Pi_h\bar u\|_{L^1(\Omega_h)}$.
\end{Lemma}
\begin{proof}
    For the reader's convenience, we present a proof which follows the arguments in \cite[Lemma 7]{CM2021}.
    Let $T$ be a {triangle} such that  $\bar H_u$ changes its sign {in $T$}. Since $\bar H_u$ is Lipschitz continuous, there exists a point $x_0 \in T$ with $\bar H_u (x_0)=0$. For $x\in T$ we obtain
    \begin{equation}\label{sigmaest}
    \vert \bar H_u(x) \vert =     \vert \bar H_u(x)-\bar H_u(x_0) \vert\leq Lip_{{\bar H_u}} \vert x-x_0\vert \leq Lip_{\bar H_u} h.
    \end{equation}
    Let us denote by $S_h$ the union of elements $T$ such that $\bar H_u$ changes the sign. Then on the set $S_h$, we have the estimate $\|\bar H_u\|_{L^\infty(S_h)}\leq Lip_{\bar H_u}h$. If $\bar H_u$ does not change the sign on an element $T$, the bang-bang structure implies $\Pi_h\bar u=\bar u $ on $\Omega\setminus S_h$. As a consequence, we obtain the estimate
    \begin{align*}
        J'(\bar u)(\Pi_h\bar u-\bar u)=\int_{\Omega_h} {\bar H_u}(\Pi_h\bar u-\bar u)\ \text{ d}x\leq Lip_{\bar H_u}h \|\Pi_h\bar u-\bar u\|_{L^1(S_h)}.
    \end{align*}
\end{proof}
The next lemma estimates the $L^1$-distance of a bang-bang reference solution and its projection. It is needed to obtain error estimates for the numerical approximation later on.
\begin{Lemma}\label{difdccc}
Let $\bar u\in \mathcal U$ satisfy Assumption \ref{asuff2}$(\beta=1/2)$, here we allow $\gamma \in (0,1]$. There exists positive constants $c$ (independent of $h$) and a $h_0$, such that for all $h<h_0$:
\begin{equation}\label{distcont}
\| \bar u-\Pi_h \bar u\|_{L^1(\Omega_h)}\leq c h^\gamma.
\end{equation}
\end{Lemma}
\begin{proof}
Since $\bar u\in \mathcal U$ satisfies Assumption \ref{asuff2}, there exist positive constants $\kappa$ and  $\alpha$ such that 
\begin{equation*}
J'(\bar u)(\Pi_h \bar u-\bar u)+1/2 J''(\bar u)(\Pi_h \bar u-\bar u)^2\geq \kappa\| \Pi_h \bar u-\bar u\|_{L^1(\Omega_h)}^{1+1/\gamma}\ \ \forall h \textrm{ s.t. } \|\Pi_h \bar u-\bar u\|_{L^1(\Omega_h)}<\alpha.
\end{equation*}
By Lemma \ref{lipest}, we obtain
\begin{equation*}
 J'(\bar u)(\Pi_h \bar u-\bar u)=\int_\Omega (\varphi_{\bar u}+L_b(\cdot,y_{\bar u})) (\Pi_h \bar u-\bar u)\dx\leq \textrm{Lip}_{\textcolor{red}{\bar H_u}}h\|\Pi_h \bar u-\bar u\|_{L^1(\Omega_h)}.
\end{equation*}
We recall that according to Theorem \ref{T3.1}, it holds
\begin{equation}
    \begin{aligned}
        J''(\bar u)(\Pi_h \bar u-\bar u)^2& = \int_\Omega\Big[\frac{\partial^2L}{\partial y^2}(\cdot,y_{\bar u},\bar u) - \varphi_{\bar u}\frac{\partial^2f}{\partial y^2}(\cdot ,y_{\bar u})\Big]
       z_{\bar u,\Pi_h \bar u-\bar u}^2\dx \\
       &+ 2\int_\Omega \Big[{\frac{\partial L_b}{\partial y}(\cdot,y_{\bar u})}\Big]z_{\bar u,\Pi_h \bar u-\bar u}(\Pi_h \bar u-\bar u)\dx=I_1+I_2.
    \end{aligned}
\end{equation}
Given $n<p$, $n/2<r$, using {Assumption \ref{A3}, Lemma \ref{L2.2}, Lemma \ref{L2.3},  Lemma \ref{stsupdual} and Lemma \ref{dualsob}}, the first term of the second variation is estimated by
\begin{align*}
\vert I_1 \vert&\leq
 \Big\|\frac{\partial^2 L}{\partial ^2 y}(\cdot,y_{\bar u})-p_{\bar u}\frac{\partial^2 f}{\partial^2 y}(\cdot, y_{\bar u})\Big\|_{L^\infty(\Omega)}\|z_{\bar u,\Pi_h \bar u-\bar u}\|_{L^2(\Omega)}^2\\
 &\leq 2(C_{L,M}+C_{L,M}C_{f,M}\vert \Omega\vert^{1/r})\vert \Omega \vert^{1/2} \|y_{\Pi\bar u}- y_{\bar u}\|_{L^\infty(\Omega)}\|z_{\bar u,\Pi_h \bar u-\bar u}\|_{L^2(\Omega)}\\
 &\leq 2C_2 \check C_p(C_{L,M}+C_{L,M}C_{f,M}\vert \Omega\vert^{1/r})\vert \Omega \vert^{1/2} \|\Pi_h \bar u-\bar u\|_{W^{-1,p}(\Omega_h)} \|\Pi_h \bar u-\bar u\|_{L^1(\Omega_h)}\\
 &\leq 2C_2 \check C_p \hat C_p(C_{L,M}+C_{L,M}C_{f,M}\vert \Omega\vert^{1/r})\vert \Omega \vert^{(2+r)/(2r)} \max\{\vert u_a\vert,\ \vert u_b\vert \}h \|\Pi_h \bar u-\bar u\|_{L^1(\Omega_h)}.
\end{align*}
It is left to estimate the term in the second line. Again using Assumption \ref{A3}, Lemma \ref{L2.3} and Lemma Lemma \ref{dualsob}, we obtain
\begin{equation}
    \begin{aligned}
       \vert I_2 \vert&\leq 2C_{L,M} \| z_{\bar u,\Pi_h \bar u-\bar u}\|_{L^\infty(\Omega)} \|\Pi_h \bar u-\bar u \|_{L^1(\Omega_h)}\\
        &\leq 4 C_{L,M} \| y_{\Pi\bar u} -y_{\bar u}\|_{L^\infty(\Omega)} \|\Pi_h \bar u-\bar u \|_{L^1(\Omega_h)}\\
        &\leq 4 C_{L,M}\hat C_p \vert \Omega\vert^{1/r} \max\{\vert u_a\vert,\ \vert u_b\vert \}h \|\Pi_h \bar u-\bar u \|_{L^1(\Omega_h)}.
    \end{aligned}
\end{equation}
Thus, we infer the existence of a positive constant $c$ such that 
\begin{equation*}
    \kappa\| \Pi_h \bar u-\bar u\|_{L^1(\Omega_h)}^{1+1/\gamma} \leq c h \|\Pi_h \bar u-\bar u \|_{L^1(\Omega_h)}.
\end{equation*}
Dividing both sides by $\|\Pi_h \bar u-\bar u \|_{L^1(\Omega_h)}$, completes the proof.
\end{proof}

Now, we are ready to state the main theorems of this section.
\begin{Theorem}\label{pointest}
Let $\bar u$ be a local solution of \eqref{const}-\eqref{see1}. Consider the constant $\alpha$ corresponding to the Assumptions \ref{asuff2}, \ref{asuffstm} or \ref{asuffst}. Consider a sequence of discrete optimal controls $\bar u_h\in \mathcal U_h$ of \eqref{discprob} that satisfy  $\|\bar u_h-\bar u\|_{L^1(\Omega_h)}< \alpha$. 
\begin{enumerate}
\item  Let $L_b=0$ in the objective functional and let $\bar u$ satisfy Assumption \ref{asuffst}$(\beta=1/2)$.
Then, there exists a positive constant $c$ independent of $h$ and a $h_0$ such that 
\begin{equation}\label{erroot}
\|{y_h(\bar u_h)}-\bar y \|_{L^2(\Omega)}+{\|\varphi_h(\bar u_h)-\bar \varphi \|_{L^2(\Omega)}}\leq c\sqrt{h}   \ \text{  for all } h\leq h_0.
\end{equation}
\item  Let {$\frac{\partial L_b}{\partial y}=0$} in the objective functional and let $\bar u$ satisfy Assumption \ref{asuffstm}$(\beta=1/2)$.
Then, there exists a positive constant $c$ independent of $h$ and a $h_0$ such that 
\begin{equation}\label{errootm}
\|{y_h(\bar u_h)}-\bar y \|_{L^2(\Omega)}+{\|\varphi_h(\bar u_h)-\bar \varphi \|_{L^2(\Omega)}}\leq {c\sqrt{h^2+h\| \Pi_h \bar u-\bar u\|_{L^1(\Omega_h)}}}   \ \text{  for all } h\leq h_0.
\end{equation}
\item Let $\bar u$ satisfy Assumption \ref{asuff2}$(\beta=1/2)$.
Then, there exists a positive constant $c$ independent of $h$ and a $h_0$ such that 
\begin{equation}\label{erlipc}
\|\bar u_h-\bar u\|_{L^1(\Omega_h)}+\|{y_h(\bar u_h)}-\bar y\|_{L^2(\Omega)}+{\|\varphi_h(\bar u_h)-\bar \varphi \|_{L^2(\Omega)}}\leq ch^\gamma   \ \text{  for all } h\leq h_0.
\end{equation}
If additionally Assumption \ref{asuffst}$(\beta=1/2)$ holds then \eqref{erlipc} holds for $\gamma\in (0,1]$.
\end{enumerate}
\end{Theorem}
\begin{proof}
Let us consider a discrete control $\bar u_h$ that satisfies the theorem's assumptions. We first prove the existence of a positive constant $c$ such that
\begin{align*}
     J(\bar u_h)-J(\bar u)&\leq \big\vert J(\bar u_h)-J_h(\bar u_h)\big\vert + J_h(\bar u_h)-J_h(\Pi_h\bar u)
     +\big\vert J_h(\Pi_h\bar u)-J(\Pi_h\bar u)\big\vert\\
     &+\big\vert J(\Pi_h\bar u)-J(\bar u)\big\vert =\vert I_1\vert +{ I_2}+\vert I_3\vert +\vert I_4\vert \leq ch^{1+\gamma}.
\end{align*}
To estimate the first term, we use Assumption \ref{A3}, the estimates in Lemma \ref{discest}, \eqref{distset} and the mean value theorem to obtain for intermediate functions $y_\theta$ and $y_\vartheta$ that
\begin{align*}
\vert I_1\vert &=\Bigg\vert\int_{\Omega\setminus \Omega_h} L(x,y_{ \bar u_h}, \bar u_h)\dx+\int_{\Omega_h} L(x,y( \bar u_h), \bar u_h)-L(x,y_{ \bar u_h}, \bar u_h)\dx\Bigg\vert\\
    &\leq \Big(\Big \| \frac{\partial L_a}{ \partial y}(\cdot,y_\theta)\Big\|_{L^2(\Omega)}+\Big\|\frac{\partial L_b}{\partial y}(\cdot,y_\vartheta)\bar u_h\Big\|_{L^2(\Omega_h)} +C_\Omega \|L(\cdot,y_{ \bar u_h}, \bar u_h)\|_{L^\infty(\Omega)}\Big)h^2\\
    &\leq (C_{L,M}+C_{L,M}\max\{\vert u_a\vert,\ \vert u_b\vert \}+C_\Omega K_{u_a,u_b, T_r} )h^2,
\end{align*}
where the constant $K_{u_a,u_b, T_r}$ with $\|L(\cdot,y_{u}, u)\|_{L^\infty(\Omega)}\leq K_{u_a,u_b, T_r} $ for all $u\in \mathcal{U}$, as indicated by the subscripts, depends on the control model though $u_a$,$u_b$ and  $T_r$. We have {$I_2\leq 0$} for the second term since $\bar u _h$ is a minimizer of \eqref{discprob}.
The term $I_3$ can be estimated similarly as the first term, 
\begin{align*}
    \vert I_3\vert &=\Bigg\vert \int_{\Omega} L(x,y_{\Pi_h \bar u},\Pi_h \bar u)\dx-\int_{\Omega_h} L(x,y(\Pi_h \bar u),\Pi_h \bar u)\dx\Bigg\vert \\
    &=\Bigg\vert \int_{\Omega\setminus \Omega_h} L(x,y_{\Pi_h \bar u},\Pi_h \bar u)\dx+\int_{\Omega_h} L(x,y_{\Pi_h \bar u},\Pi_h \bar u)-L(x,y(\Pi_h \bar u),\Pi_h \bar u)\dx\Bigg\vert\\
    &\leq\Big(\Big \| \frac{\partial L_a}{ \partial y}(x,y_\theta)\Big\|_{L^2(\Omega)}+\Big\|\frac{\partial L_b}{\partial y}(x,y_\vartheta)\Pi_h\bar u\Big\|_{L^2(\Omega_h)} +C_\Omega \|L(x,y_\vartheta,\Pi_h\bar u)\|_{L^\infty(\Omega)}\Big) h^2\\
    &\leq (C_{L,M}+C_{L,M}\max\{\vert u_a\vert,\ \vert u_b\vert \}+C_\Omega K_{u_a,u_b, T_r} )h^2.
\end{align*}
We come to the crucial part of the proof, the estimate of the last term $I_4$. The estimation of the term $I_4$ determines the overall convergence rate since the other terms already satisfy the good rate $\vert I_1\vert, \vert I_2\vert \leq c h^2$. To shorten the notation, let us denote by $L_{a,y}$ and $L_{b,y}$ the derivatives of $L_a$ and $L_b$ by $y$. By the mean value theorem, for some intermediate function $y_\theta$ we have
\begin{align*}
 &I_4= \int_\Omega L(x,y_{\Pi_h(\bar u)},\Pi_h(\bar u))-L(x,y_{\bar u},\bar u)\dx=\int_\Omega L_{a,y}(x,y_\theta)(y_{\Pi_h\bar u}-y_{\bar u})\dx\\
 &+\int_\Omega L_{b,y}(x,y_\theta)(y_{\Pi_h\bar u}-y_{\bar u})\Pi_h \bar u\dx
+\int_\Omega L_{b}(x,y_{\bar u})(\Pi_h \bar u-\bar u)\dx\\
&=\int_\Omega L_{a,y}(x,y_{\bar u})(y_{\Pi_h\bar u}-y_{\bar u})\dx+\int_\Omega L_{b,y}(x,y_{\bar u})(y_{\Pi_h\bar u}-y_{\bar u}) \bar u\dx\\
&+\int_\Omega L_{b}(x,y_{\bar u})(\Pi_h \bar u-\bar u)\dx+\int_\Omega (L_{a,y}(x,y_\theta)-L_{a,y}(x,y_{\bar u}))(y_{\Pi_h\bar u}-y_{\bar u})\dx \\
&+\int_\Omega L_{b,y}(x,y_{\bar u})(y_{\Pi_h\bar u}-y_{\bar u})(\Pi_h \bar u-\bar u)\dx+\int_\Omega ( L_{b,y}(x,y_\theta)-L_{b,y}(x,y_{\bar u}))(y_{\Pi_h\bar u}-y_{\bar u})\Pi_h \bar u\dx.
\end{align*}
Thus,
\begin{align*}
&I_4=\Big[\int_\Omega (L_{a,y}(x,y_{\bar u})+L_{b,y}(x,y_{\bar u})\bar u)z_{\bar u,\Pi_h \bar u-\bar u}\dx\Big]
+\Big[ \int_\Omega L_{b}(x,y_{\bar u})(\Pi_h \bar u-\bar u)\dx\Big]\\
&+\Big[\int_\Omega (L_{a,y}(x,y_{\bar u})+L_{b,y}(x,y_{\bar u})\Pi_h \bar u)(y_{\Pi_h\bar u}-y_{\bar u}-z_{\bar u,\Pi_h \bar u-\bar u})\dx\Big]\\
&+\Big[\int_\Omega (L_{a,y}(x,y_\theta)-L_{a,y}(x,y_{\bar u}))(y_{\Pi_h\bar u}-y_{\bar u})\dx\Big]+\Big[\int_\Omega ( L_{b,y}(x,y_\theta)-L_{b,y}(x,y_{\bar u}))(y_{\Pi_h\bar u}-y_{\bar u})\Pi_h \bar u\dx\Big]\\
&+\Big[\int_\Omega L_{b,y}(x,y_{\bar u})(y_{\Pi_h\bar u}-y_{\bar u})(\Pi_h \bar u-\bar u)\dx\Big]=\sum_{i=1}^6 K_i.
\end{align*} Let us first consider the arguments for the estimation \eqref{erlipc}.
We remind that since $\bar u$ satisfies Assumption \ref{asuff2}, it is bang-bang. We estimate the terms $K_1$ and $K_2$ together, that is, integrating by parts, using Lemma \ref{lipest} and Lemma \ref{difdccc} guarantee the existence of a positive constant $c$ such that
\begin{align*}
\vert K_1+K_2\vert&={\Big\vert\int_\Omega (p_{\bar u}+ L_b(x,y_{\bar u}))(\Pi_h\bar u-\bar u)\dx\Big\vert}\\
&\leq \textrm{Lip}_{\bar H_u}\|\bar H_u \|_{L^\infty(\Omega)} \|\Pi_h \bar u-\bar u\|_{L^1(\Omega_h)}\leq c\textrm{Lip}_{\bar H_u}h^{1+\gamma}.
\end{align*}
The term $K_3$ is estimated, using Assumption \ref{A3}, Lemma \ref{L2.3},  Lemma \ref{stsupdual} and Lemma \ref{dualsob},
\begin{align*}
\vert K_3\vert &\leq C_{L,M}(1+ \max\{\vert u_a\vert, \vert u_b\vert \}) \|y_{\Pi_h\bar u}-y_{\bar u}-z_{\bar u,\Pi_h \bar u-\bar u}\|_{L^1(\Omega)}^2\\
&\leq C_{L,M}M_1(1+ \max\{\vert u_a\vert, \vert u_b\vert \})   \|y_{\Pi_h\bar u}-y_{\bar u}\|_{L^2(\Omega)}^2\\
&\leq C_{L,M}\check C_p^2 M_1(1+ \max\{\vert u_a\vert, \vert u_b\vert \}) \vert \Omega \vert \| \Pi_h \bar u-\bar u\|_{W^{-1,p}(\Omega_h)}^2\\
&\leq C_{L,M}\check C_p^2 \hat C_p^2 M_1(1+ \max\{\vert u_a\vert, \vert u_b\vert \}) \vert \Omega \vert^{1+2/p} \max\{\vert u_a\vert, \vert u_b\vert \}^2h^2.
\end{align*}
For the estimation of the term $K_4$ we use that due to Assumption \ref{A3}, $L_{y, a}$ is locally Lipschitz continuous, with Lipschitz constant denoted by $\textrm{Lip}_{L_{a,y};M}$. We obtain using again Lemma \ref{L2.3},  Lemma \ref{stsupdual} and Lemma \ref{dualsob} that 
\begin{align*}
    \vert K_4\vert &\leq \textrm{Lip}_{L_{y,a};M} \vert \Omega \vert   \|y_{\Pi_h\bar u}-y_{\bar u}\|_{L^\infty(\Omega)}^2\\
    &\leq \textrm{Lip}_{L_{a,y};M} \check C_p^2 \hat C_p^2 \vert \Omega \vert^{1+2/p}  \max\{\vert u_a\vert, \vert u_b\vert \}^2  h^2.
\end{align*}

Denoting the local Lpischitz constant of $L_{b,y}$ by $ \textrm{Lip}_{L_{b,y};M}$, the term $K_5$ is estimated using the same arguments by
\begin{align*}
    \vert K_5 \vert & \leq \textrm{Lip}_{L_{b,y};M} \check C_p^2 \hat C_p^2 \vert \Omega \vert^{1+2/p}  \max\{\vert u_a\vert, \vert u_b\vert \}^3  h^2.
\end{align*}
Finally, for the term $I_6$,using Assumption \ref{A3}, Lemma \ref{L2.3},  Lemma \ref{stsupdual} and Lemma \ref{dualsob}, we estimate
\begin{align*}
\vert K_6\vert &\leq C_{L,M}\|y_{\Pi_h\bar u}-y_{\bar u}\|_{L^\infty(\Omega)}\| \Pi_h \bar u-\bar u\|_{L^1(\Omega_h)}\\
&\leq C_{L,M} \check C_p\| \Pi_h \bar u-\bar u\|_{W^{-1,p}(\Omega_h)} \| \Pi_h \bar u-\bar u\|_{L^1(\Omega_h)}\\
&\leq C_{L,M} \check C_p \hat C_p \max\{\vert u_a\vert, \vert u_b\vert \}\vert \Omega \vert^{1/p} c  h^{1+\gamma}.
\end{align*}
To complete the proof of \eqref{erlipc}, we conclude from the estimates of the terms $I_i$, $i\in \{1,...,6\}$, Theorem \ref{eqhalf} and Theorem \ref{coer1}, that there exist positive constants $\kappa$,$k$ and $\alpha$ such that 
\begin{equation*}
     \kappa\|\bar u_h-\bar u\|_{L^1(\Omega_h)}^{1+\frac{1}{\gamma}} \leq J(\bar u_h)-J(\bar u) \leq  kh^{1+\gamma} \ \text{ for all } \bar u_h \text{ with }\|\bar u_h-\bar u\|_{L^1(\Omega_h)}<\alpha.
\end{equation*}
This is equivalent to: $ (\kappa/k)^{\frac{\gamma}{\gamma+1}}h^\gamma\geq\|\bar u_h-\bar u\|_{L^1(\Omega_h)} \ \text{ for all } \bar u_h \text{ with }\|\bar u_h-\bar u\|_{L^1(\Omega_h)}<\alpha$.
To estimate the states and adjoint states, we argue as follows. For the states we see by Lemma \ref{L2.2} applied to $z_v:=\bar y-y_{\bar u_h}$, $v=\bar u-\bar u_h$, and Lemma \ref{discest}, \eqref{estforstate2}, that
\begin{equation*}
    \begin{aligned}
        \|\bar y- y_h(\bar u_h)\|_{L^2(\Omega)}&\leq \|\bar  y-y_{\bar u_h}\|_{L^2(\Omega)}+\| y_{\bar u_h}-y_h(\bar u_h)\|_{L^2(\Omega)}\\
        &\leq \hat C_2 \|\bar u-\bar u_h\|_{L^1(\Omega_h)}+\tilde C_2 h^2\leq \hat C_2 (\kappa/k)^{\gamma/(\gamma+1)}h^\gamma+\tilde C_2 h^2.
    \end{aligned}
\end{equation*}
For the estimate of the adjoint states, we use that as a consequence of Lemma \ref{L2.2} and $\|\bar  p-p_{\bar u_h}\|_{L^2(\Omega)}\leq C_2 \|\bar  y-y_{\bar u_h}\|_{L^2(\Omega)}$ that
\begin{equation*}
    \begin{aligned}
         \|\bar \varphi- \varphi_h(\bar u_h)\|_{L^2(\Omega)}&\leq \|\bar  \varphi-\varphi_{\bar u_h}\|_{L^2(\Omega)}+\| \varphi_{\bar u_h}-\varphi_h(\bar u_h)\|_{L^2(\Omega)}\\
        &\leq \hat C_2 \|\bar u-\bar u_h\|_{L^1(\Omega_h)}+\tilde C_2 h^2\leq \hat C_2 (\kappa/k)^{\gamma/(\gamma+1)}h^\gamma+\tilde C_2 h^2.
    \end{aligned}
\end{equation*}
and the proof of \eqref{erlipc} is complete.
Let us briefly comment on the procedure for the other claims \eqref{erroot} and \eqref{errootm}. Let us first consider \eqref{erroot}. Here, we do not assume that $\bar u $ is bang-bang. The terms $I_1$ and $I_3$ can be estimated by the same arguments as before thus we infer the existence of a positive constant $c$ such that 
\begin{equation*}
    \vert I_1\vert , \vert I_2 \vert \leq c h^2.
\end{equation*}
The estimation of the term $I_4$ is substantially easier due to the assumption $L_b=0$. We only need to consider the terms $K_1$, $K_3$ and $K_4$. For $K_1$ we estimate using Assumption \ref{A3}, Lemma \ref{L2.3}, Lemma \ref{stsupdual} and Lemma \ref{dualsob} to obtain
\begin{equation*}
    \begin{aligned}
        \vert K_1 \vert &\leq C_{L,M} \| z_{\bar u, \Pi_h \bar u-\bar u}\|_{L^2(\Omega)}\leq 3/2C_{L,M} \vert \Omega\vert^{1/2}\|y_{\Pi_h \bar u} -\bar y\|_{L^\infty(\Omega)}\\
        &\leq 2/3  C_{L,M} \check C_p \hat C_p\Omega\vert^{1/2}\|\Pi_h \bar u -\bar u\|_{W^{-1,p}(\Omega)}\leq 2/3  C_{L,M} \check C_p \hat C_p\Omega\vert^{1/2+1/p}\max\{\vert u_a\vert, \vert u_b\vert \} h.
    \end{aligned}
\end{equation*}
The terms $K_3$ and $K_4$ are estimated in the same way as before.
Thus, in total, we proved the existence of a positive constant c such that
\begin{equation*}
    \kappa\| z_{\bar u,\bar u_h-\bar u}\|_{L^2(\Omega)}^2\leq c h \ \text{ for all } \bar u_h \text{ with }\|y_{\bar u_h}-y_{\bar u}\|_{L^\infty(\Omega)}<\alpha.
\end{equation*}
This of course using Lemma \ref{L2.3} implies that $ \|\bar y- y_h(\bar u_h)\|_{L^2(\Omega)} \leq c h^{1/2}$. Now, the adjoint states can be estimated as argued above.
Finally, let us consider \eqref{errootm}. The terms $I_1$ and $I_3$ are estimated as before. Since $\frac{\partial L_b}{\partial y}=0$ we only have to estimate the terms $K_i$, $i\in \{1,...,4\}$ in $I_4$. Since $\bar u$ is bang-bang according to Proposition \ref{mixedbangbang}, we can employ Lemma \ref{lipest} to infer
\begin{align*}
\vert K_1+K_2\vert&=\Big\vert\int_\Omega (\varphi_{\bar u}+ L_b(x,y_{\bar u}))(\Pi_h\bar u-\bar u)\dx\Big\vert\\
&\leq c\textrm{Lip}_{\bar H_u} \|\Pi_h \bar u-\bar u\|_{L^1(\Omega_h)}h.
\end{align*}
The terms $K_3$ and $K_4$ are estimated as before. Thus, all in all, we have a positive constant $c$ such that
\begin{equation*}
   \frac{\kappa}{C_2}\|z_{\bar u,u-\bar u}\|_{L^2(\Omega)}^2\leq \kappa\|z_{\bar u,u-\bar u}\|_{L^2(\Omega)}\|\bar u-u\|_{L^1(\Omega)}\leq   J(u)-J(\bar u)\leq ch( h+\|\Pi_h\bar u-\bar u\|_{L^1(\Omega)}),
\end{equation*}
which, estimating the adjoint states as above, completes the proof.
\end{proof}

Under some mild additional assumption on the zero level set of $\bar H_u$ that exclude the appearance of singular arcs, we can significantly improve the result of Theorem \ref{pointest} using the next lemma instead of Lemma \ref{difdccc}. In what follows, we denote by $\mathcal H^m$, the $m$-dimensional Hausdorff measure, see \cite[Chapter 2]{EG1992}.

\begin{Lemma}\label{discretest}
Let $\bar u\in \mathcal{U}$ be bang-bang. Let us denote by A the points where {$\bar H_u=0$}.
Assume that $A$ consists of a finite union of $C^1$ curves $(n=2)$ or $C^1$-hypersurfaces $(n=3)$.
Then there exist  positive constants $c$ (independent of $h$) and $h_0$ such that for all $h\leq h_0$:
\begin{equation}
     \|\bar u-\Pi_h\bar u\|_{L^1(\Omega_h)}\leq c h. 
\end{equation}
\end{Lemma}

\begin{proof}
    First we notice that $\mathcal{H}^{n-1}(A)<\infty$.
 Let $\bar u$ be bang-bang and let, as before, denote by $\mathcal{S}_h$ the collection of the elements where there exists $x\in \text{int } T$ with $\bar H_u(x)=0$ and denote, as in Lemma \ref{lipest}, by $S_h$ the union of those elements. Then 
\begin{align*}
   & \|\bar u-\Pi_h\bar u\|_{L^1(\Omega_h)}= \|\bar u-\Pi_h\bar u\|_{L^1(S_h)}\\
    &\leq \sum_{T\in \mathcal S_h}\int_{T}\Big\vert \bar u(x)- \frac{1}{\mathcal{H}^n( T)}\int_{T} \bar u(y)\text{ d}y \Big\vert\text{d}x=\sum_{T\in \mathcal S_h}\int_T\frac{1}{\mathcal{H}^n( T)} \Big \vert \int_T\bar u(x)- \bar u(y)\text{ d}y \Big\vert \text{d}x\\
    &\leq \sum_{T\in \mathcal S_h}\int_{T}\frac{1}{\mathcal{H}^n( T)} \int_{T} \vert\bar u(x)- \bar u(y)\vert \text{ d}y \text{d}x\leq \|u_b-u_a\|_{L^\infty(\Omega
    )} \sum_{T\in \mathcal S_h} \mathcal{H}^n( T).
\end{align*}
 If the diameter $h$ of the quasi-uniform triangulations is sufficiently small, the number of elements that cover the set $A$ can be estimated by the quotient of the diameter $h$ and $\mathcal H^{n-1}(A)$. That is, there exists a constant $c$ (independent of $h$) and a $h_0$ such that for all $h\leq h_0$
\begin{equation*}
    \sum_{T\in \mathcal S_h}1\leq c \frac{ \mathcal H^{n-1}(A)}{h^{n-1}}.
\end{equation*}
This can be seen by the following arguments for the 2-dimensional case. 
The set $A=\cup_{i=1}^m A_i$ consists of a finite union of $C^1$-curves. Let us consider a given curve $A_i$ and a triangle $T\in \mathcal S_h$; we realize that $A_i$ intersects triangles $T$ such that for any given $x\in A_i$, $\max_{ y\in T} d(x, y)\leq h$. Now take for each $x\in A_i$ the unit normal $\nu(x)$ to $A_i$ and define the set $E_i^x:=\{x+\xi\nu(x)\vert \xi\in [-h,h]\}$ and $E_i:=\cup_{x\in A_i}E_i^x$. Then $\mathcal{H}^2(E_i)=2h\cdot {\mathcal H^1(A_i)}$. 

On the other hand, due to the quasi uniformity of the triangulation, there exists a positive constant $\bar c$ such that the measure of the triangles $T$ is uniformly bounded from below by $\bar c h^2$. Therefore there are at most 
\begin{equation*}
    \frac{\mathcal H^2(E_i)}{\bar ch^2}=\frac{2\mathcal H^1(A_i)h}{\bar ch^2}=\frac{2\mathcal H^1(A_i)}{\bar ch},
\end{equation*}
triangles that intersect the curve $A_i$. We obtain the claim by applying this to all the arcs $A_i$. The three-dimensional case follows by straightforward adaptions of the argument.
From here, using that for a positive constant $\tilde c$, the measure of the elements $T$ can be uniformly estimated by $\vert T \vert \leq \tilde c h^{n}$, we conclude:
\begin{align*}
   & \|\bar u-\Pi_h\bar u\|_{L^1(\Omega_h)}= \|\bar u-\Pi_h\bar u\|_{L^1(S_h)}\leq \|u_b-u_a\|_{L^\infty(\Omega
    )} \sum_{T_h\in \mathcal S_h} \mathcal{H}^n( T)\\
    &\leq \|u_b-u_a\|_{L^\infty(\Omega
    )}\frac{c \mathcal H^{n-1}(A)}{ h^{n-1}} \max_{T\in \mathcal S_h} \mathcal{H}^n( T)\leq \|u_b-u_a\|_{L^\infty(\Omega
    )}\frac{ \mathcal H^{n-1}(A)}{h^{n-1}} c\tilde c h^{n}.
\end{align*}
\end{proof}
    {Due to the assumption on the optimal control problem in this paper, the adjoint $\bar \varphi$ has regularity $W^{2,p}(\Omega)$, $p<\infty$. We remark that due to \cite[Section 2]{G1985}, for this result, it is necessary to have a $C^{1,1}$ boundary.} Then, due to the $W^{2,p}(\Omega)$ regularity of the adjoint, in dimension $n=2$, the Morse-Sard theorem for Sobolev functions and the implicit function theorem implies that for almost all $t$ in the image of $\bar \varphi$, the level set $[\bar \varphi=t]$ consists of finitely many disjoint $C^1$ simple curves \cite{BKK,D2001,F}. This almost everywhere result can be improved if $\bar \varphi$ satisfies
     \begin{equation}\label{gradgro}
    \min_{x\in A} \vert \nabla \bar \varphi(x)\vert  > 0,\ \ \ A = \{x\in \Omega \vert\ \bar \varphi(x) =0\}.
    \end{equation}
    Indeed, if \eqref{gradgro} is satisfied, $[\bar \varphi=0]$ consists of a finite union of simple $C^1$ curves, see \cite[Proposition 2.4, Corollary 2.12]{CNR}.
    This supports the assumption of Lemma \ref{discretest}. On the other hand, if $\bar \varphi\in C^1(\bar \Omega)$ satisfies \eqref{gradgro}, it already holds $\vert \{x\in \Omega\vert\ \vert \bar \varphi\vert \leq \varepsilon\}\vert \leq c\varepsilon$, 
    see \cite[Lemma 3.2]{DH2012b}. We apply Lemma \ref{discretest} when we only expect 
    \begin{equation*}
        \vert \{x\in \Omega\vert\ \vert \bar \varphi\vert \leq \varepsilon\}\vert \leq c\varepsilon^\gamma, \ \ \gamma\in (0,1),
    \end{equation*} which permits $\min_{x\in A} \vert \nabla \bar\varphi(x)\vert = 0$.
\bigskip

We obtain the following improvement of the estimation in Theorem \ref{pointest}.

\begin{Theorem}\label{pointest2}
Let $\bar u$ be a local solution of \eqref{ocp1}. Consider the constant $\alpha$ corresponding to Assumptions \ref{asuff2}, \ref{asuffstm} or \ref{asuffst}. Consider discrete optimal controls $\bar u_h\in \mathcal U_h$ of \eqref{discprob} that satisfy  $\|\bar u_h-\bar u\|_{L^1(\Omega_h)}< \alpha$. Further, assume that $\bar H_u $ satisfies the assumption of Lemma \ref{discretest}.
\begin{enumerate}
\item Let $L_b=0$ in the objective functional, let $\bar u$ be bang-bang and let $\bar u$ satisfy Assumption \ref{asuffst}$(\beta=1/2)$.
Then, there exists a positive constant $c$ independent of $h$ and a $h_0$ such that 
\begin{equation}\label{eqquation:improved1}
\|y_h(\bar u_h)-\bar y \|_{L^2(\Omega)}+{\|\varphi_h(\bar u_h)-\bar \varphi \|_{L^2(\Omega)}}\leq ch  \ \text{  for all } h\leq h_0.
\end{equation}
\item  Let {$\frac{\partial L_b}{\partial y}=0$} in the objective functional and let $\bar u$ satisfy Assumption \ref{asuffstm}$(\beta=1/2)$.
Then, there exists a positive constant $c$ independent of $h$ and a $h_0$ such that 
\begin{equation}\label{eqquation:improved2}
\|y_h(\bar u_h)-\bar y \|_{L^2(\Omega)}+{\|\varphi_h(\bar u_h)-\bar \varphi \|_{L^2(\Omega)}}\leq ch \ \text{  for all } h\leq h_0.
\end{equation}
\item 
Let $\bar u$ satisfy Assumption \ref{asuff2}$(\beta=1/2)$.
Then, there exists a positive constant $c$ independent of $h$ and a $h_0$ such that 
\begin{equation}\label{erlipcbb}
\|\bar u_h-\bar u\|_{L^1(\Omega_h)}+\|y_h(\bar u_h)-\bar y\|_{L^2(\Omega)}+{\|\varphi_h(\bar u_h)-\bar \varphi \|_{L^2(\Omega)}}\leq ch^{\frac{2\gamma}{\gamma+1}}   \ \text{  for all } h\leq h_0.
\end{equation}
\end{enumerate}
\end{Theorem}
\begin{proof}
  Most of the steps of the proof are the same as in the proof of Theorem \ref{pointest}. What is different is that instead of Lemma \ref{difdccc}, we apply Lemma \ref{lipest} together with Lemma \ref{discretest}, which allows for all bang-bang optimal controls $\bar u$, the estimate 
    \begin{equation}\label{equation:improvedest}
    \vert K_1+K_2 \vert =\vert J'(\bar u)(\Pi_h\bar u-\bar u)\vert \leq \mathrm{Lip}_{\bar \sigma}h\|\bar u-\Pi_h\bar u\|_{L^1(\Omega_h)}\leq c h^2.
    \end{equation}
    From here, we argue as before, to obtain the estimate 
    \begin{equation*}
    \kappa\|\bar u_h-\bar u\|_{L^1(\Omega_h)}^{1+\frac{1}{\gamma}}\leq J(\bar u_h)-J(\bar u) \leq c h^2\ \text{ for all } \bar u_h \text{ with }\|\bar u_h-\bar u\|_{L^1(\Omega_h)}<\alpha,
    \end{equation*}
    which yields following the same arguments as before the estimate \eqref{erlipcbb}.
    The claim under Assumption \ref{asuffstm}, \eqref{eqquation:improved2}, follows again using the same estimations as in the proof of Theorem \ref{pointest} together with the estimate \eqref{equation:improvedest}. Finally, \eqref{eqquation:improved1} is also a direct consequence of the estimations in the proof of Theorem \ref{pointest} and \eqref{equation:improvedest}.
    
\end{proof}

\subsection{Variational discretization}\label{vardisc}
We prove that Assumptions \ref{asuff2}, \ref{asuffstm}, and  \ref{asuffst} with $\beta =1$ are sufficient for error estimates for a variational discretization scheme. We refer to the \cite{H2005} for the idea and introduction of variational discretization. The assumptions on the objective functional we are considering are weaker than the ones in \cite{CM2021}, still the estimates given in Theorem \ref{vardiscest} below agree with the estimates in \cite[Remark 7]{CM2021} for the variational discretization.
We come to the error estimates for the variational discretization.
\begin{Theorem}
\label{vardiscest}
Let $\bar u$ be a local solution of \eqref{ocp1}. There exist positive constant $c$ and $h_0$ independent of $h$ such that for any sequence of solutions to the first-order optimality condition of the discrete problems, $\{\bar u_h\}_h$,  the following holds:
\begin{enumerate}
\item Let Assumption \ref{asuffst}$(\beta=1)$ be satisfied by $\bar u$. Then
\begin{align}\label{erestst}
\|y_h(\bar u_h)-\bar y\|_{L^2(\Omega)}+\|\varphi_h(\bar u_h)-\bar \varphi\|_{L^\infty(\Omega)}\leq ch   \ \text{  for all } h\leq h_0 .
\end{align}
\item  Let Assumption \ref{asuffstm}$(\beta=1)$ be satisfied by $\bar u$. Then
\begin{align}\label{ereststm}
\|y_h(\bar u_h)-\bar y\|_{L^2(\Omega)}+\|\varphi_h(\bar u_h)-\bar \varphi\|_{L^\infty(\Omega)}\leq c(h\vert\log h\vert)^2   \ \text{  for all } h\leq h_0.
\end{align}
\item Let Assumption \ref{asuff2}$(\beta=1)$ be satisfied by $\bar u$ for some $\gamma\in (n/(2+n),1]$. Then
\begin{align}\label{ereststall}
\|\bar u_h-\bar u\|_{L^1(\Omega_h)}+\|y_h(\bar u_h)-\bar y\|_{L^2(\Omega)}+\|\varphi_h(\bar u_h)-\bar \varphi\|_{L^\infty(\Omega)}\leq c(h\vert\log h\vert)^{2\gamma}   \ \text{  for all } h\leq h_0 .
\end{align}
\end{enumerate}
\end{Theorem}
\begin{proof}
We consider \eqref{ereststall}. Since $\bar u_h$ satisfies the first-order necessary optimality condition of the discrete problem, it holds
\begin{equation}\label{estimationfin}
\begin{aligned}
0&\geq J'_h(\bar u_h)(\bar u_h- \bar u)=J'(\bar u_h)(\bar u_h-\bar u)+J'_h(\bar u_h)(\bar u_h-u)-J'(\bar u_h)(\bar u_h-\bar u)\\
&\geq J'(\bar u)(\bar u_h-\bar u)+J''(\bar u)(\bar u_h-\bar u)^2-\vert J'(\bar u_h)(\bar u_h-\bar u)-J'(\bar u)(\bar u_h-\bar u)-J''(\bar u)(\bar u_h-\bar u)^2\vert\\
&+J'_h(\bar u_h)(\bar u_h-u)-J'(\bar u_h)(\bar u_h-\bar u).
\end{aligned}
\end{equation}
Utilizing Taylor's theorem, Lemma \ref{estfv} and Assumption \ref{asuff2}, we obtain
\begin{equation}\label{vardiscprep}
J'(\bar u_h)(\bar u_h-\bar u)-J'_h(\bar u_h)(\bar u_h-\bar u) \geq c\|\bar u_h-\bar u\|_{L^1(\Omega_h)}^{1+\frac{1}{\gamma}}.
\end{equation}
Since the discrete problem depends only on the values of the optimal control on the set $\Omega_h$, we may define $\bar u_h=\bar u$ on $\Omega\setminus\Omega_h$ and write 
\begin{align*}
&J'_h(\bar u_h)(\bar u_h-\bar u)-J'(\bar u_h)(\bar u_h-\bar u)=\int_{\Omega_h} (\varphi_h(\bar u_h)+L_b(x,y_h(\bar u_h)))(\bar u_h- \bar u)\dx\\
&-\int_{\Omega_h}(\varphi_{\bar u_h}+L_b(x,y_{\bar u_h}))(\bar u_h-\bar u)\dx=I.
\end{align*}
To estimate $I$, we follow similar reasoning as in \cite{CM2021}, using \eqref{estforstatelog}, Lemma \ref{discest}, Theorem \ref{interadj} and also using the local Lipschitz property of $L_b$ for $y$, to infer for some positive constant $c$.
\begin{align*}
&I\leq (\|\bar \varphi_h(\bar u_h)-\varphi_{\bar u_h}\|_{L^\infty(\Omega)}+\text{Lip}_{L_b,y;M}\|y_h(\bar u_h)-y_{\bar u_h}\|_{L^\infty(\Omega)})\|\bar u_h- \bar u\|_{L^1(\Omega_h)}\\
&\leq (\|\varphi_h(\bar u_h)-\varphi^h_{\bar u_h}\|_{L^\infty(\Omega)}+\| \varphi^h_{\bar u_h}-\varphi_{\bar u_h}\|_{L^\infty(\Omega)})\| \bar u_h- \bar u\|_{L^1(\Omega_h)}\\
&+\text{Lip}_{L_b,y;M}\|y_h(\bar u_h)-y_{\bar u_h}\|_{L^\infty(\Omega)}\| \bar u_h- \bar u\|_{L^1(\Omega_h)}\leq c(h^2+h^2\vert \log h\vert ^2)\| \bar u_h-\bar u\|_{L^1(\Omega_h)}.
\end{align*}
Altogether, utilizing \eqref{vardiscprep} we obtain for a positive constant again denoted by $c$ that
\begin{equation}\label{controlestv}
\|\bar u_h-\bar u\|_{L^1(\Omega_h)}\leq c(h^2+2h^2\vert \log h\vert ^2)^\gamma.
\end{equation}
For the states, we use \eqref{estforstatelog}, Lemma \ref{L2.2} and Lemma \ref{E2.12} to find for a positive constant $c$
\begin{align*}
    \|y_h(\bar u_h)-y_{\bar u}\|_{L^2(\Omega)}&\leq \|y_h(\bar u_h)-y_{\bar u_h}\|_{L^2(\Omega)}+\|y_{\bar u_h}-y_{\bar u} \|_{L^2(\Omega)}\leq c(h^2+\|\bar u_h-\bar u\|_{L^1(\Omega_h)})
\end{align*}
and the estimate follows from \eqref{controlestv}. The adjoints can be estimated from here using straightforward arguments.
Under Assumption \ref{asuffstm}, by \eqref{mixedvarcoer}, it holds
\begin{align*}
c\|\bar u_h-\bar u \|_{L^1(\Omega)}\|y_{\bar u_h}-y_{\bar u}\|_{L^2(\Omega)}\leq J'(\bar u_h)(\bar u_h-\bar u)-J'_h(\bar u_h)(\bar u_h-\bar u)\label{est46}.
\end{align*}
Estimating as before, we obtain the existence of a positive constant $c$ that satisfies
\begin{equation*}
\|y_{\bar u_h}-y_{\bar u}\|_{L^2(\Omega)}\leq c(h^2+h^2\vert \log h\vert ^2).
\end{equation*}
By again \eqref{estforstate2}, \eqref{distadj1} and \eqref{distadj2} the claim \eqref{ereststm} holds.
Finally, consider Assumption \ref{asuffst} and apply Lemma \ref{estfv2} to \eqref{estimationfin}, then it holds
\begin{align*}
c\|y_{\bar u_h}-y_{\bar u}\|_{L^2(\Omega)}^2\leq J'(\bar u_h)(\bar u_h-\bar u)-J'_h(\bar u_h)(\bar u_h-\bar u)\label{est46}.
\end{align*}
To estimate $I$, we use \eqref{distadj1}-\eqref{distadj2} to find
\begin{align*}
I&\leq \|\varphi_h(\bar u_h)-\varphi_{\bar u_h}\|_{L^2(\Omega)}\| \bar u_h- \bar u\|_{L^2(\Omega_h)}\\
&\leq (\|\varphi_h(\bar u_h)-\varphi^h_{\bar u_h}\|_{L^2(\Omega)}+\| \varphi^h_{\bar u_h}-\varphi_{\bar u_h}\|_{L^2(\Omega)})\| \bar u_h- \bar u\|_{L^2(\Omega_h)}\leq ch^2\|u_a-u_b\|_{L^\infty(\Omega)},
\end{align*}
for some positive constant $c$.
This leads to the estimate $\|y_{\bar u_h}-y_{\bar u}\|_{L^2(\Omega)}\leq ch$, and by \eqref{estforstate2} and \eqref{distadj1} the claim \eqref{erestst} holds.
\end{proof}
For a numerical example supporting the theoretical error estimates achieved in this paper, especially for the case $\gamma<1$, we refer to \cite{CM2021}.
\subsection{Solution stability and variational discretization}
One of the main results of this paper is that Assumptions \ref{asuff2}, \ref{asuffstm}, and \ref{asuffst} imply error estimates for the numerical approximation. These assumptions appeared first in the study of the solution stability of optimal control and states under perturbations appearing in the objective functional and the constraining PDE \cite{DJV2022, CDJ2023, DJV2022b}. We present an application of the solution stability property to obtain error estimates for a variational discretization scheme. In this sense, Theorem \ref{sifeer} below shows that a property related to solution stability guarantees the achievement of error estimates for a variational discretization scheme. The intuition we propose is that once solution stability is obtained under a growth condition on the objective functional, we can expect error estimates for the variational discretization scheme.

First, let us fix a positive constant $M$ and define the set of feasible perturbations by
\begin{equation*}
    \Big\{\zeta:=(\xi,\eta,\rho)\in L^{2}(\Omega)\times L^2(\Omega)\times L^\infty(\Omega)\vert \ \| \xi\|_{L^2(\Omega)}+\|\eta\|_{L^2(\Omega)}+\|\rho\|_{L^\infty(\Omega)}\leq M\Big\}.
\end{equation*}
Then, we define the perturbed problem by
\begin{equation}\label{perpp}
\min_{u\in \mathcal U} \Big\{J_\zeta(u):=\int_\Omega L(x,y_u, u)+\rho u+\eta y_u \dx\Big\}
\end{equation}
subject to \eqref{const} and
\begin{equation}
		\left\{ \begin{array}{lll}
		 Ay+f(\cdot,y)& =u+\xi\  &\text{ in }\ \Omega,\\
		y&=0\ &\text{ on } \partial \Omega.\\
	\end{array} \right.
	\label{sep1}
	\end{equation}
The existence of a globally optimal solution to \eqref{perpp}-\eqref{sep1} is guaranteed by the assumptions on the optimal control problem and the direct method in the calculus of variations.
Let us define a property that is strongly related to the notion of solution stability.
\begin{Definition}
\label{Dsmsrdr}
We call the optimal control problem \eqref{const}-\eqref{see1} to be solution stable at $\bar u$ for $V\subset \mathcal U$, with parameters $\kappa, \gamma$ and $\alpha$ if 
\begin{equation*}
    \|\bar u-\bar u^\zeta \|_{L^1(\Omega)}+\|\bar y-\bar y^\zeta\|_{L^2(\Omega)}+\|\bar \varphi-\bar \varphi^\zeta\|_{L^\infty(\Omega)} \leq \kappa \Big( \| \xi\|_{L^2(\Omega)} +\|\eta \|_{L^2(\Omega)}+\| \rho \|_{L^\infty(\Omega)}\Big)^\gamma
\end{equation*}
for all triples $(\bar u^\zeta, \bar y^\zeta,\bar \varphi^\zeta)$ corresponding to the perturbed problem \eqref{perpp}-\eqref{sep1} that satisfy

\noindent $\|\bar u-\bar u^\zeta\|_{L^1(\Omega)}<\alpha$ and $J_\zeta '(\bar u^\zeta)(v-\bar u^\zeta)\geq 0 \ \text{ for all } v\in V$.
\end{Definition}
Assumption \ref{asuff2} implies the notion of solution stability in Definition \ref{Dsmsrdr}. This can be observed by investigating the proof of the strong metric subregularity property of the optimality mapping in \cite{DJV2022b}.
\begin{Theorem}\label{sifeer}
    Let the optimal control problem be solution stable at $\bar u$ for $\{ \bar u\}$ with positive constants $\gamma, \kappa $ and $\alpha$. Let $\{\bar u_h\}_{h}$ be a sequence of solutions to the discrete problems \eqref{discprob} with $\|\bar {u_h}-\bar u\|_{L^1(\Omega)}<\alpha$. Then there exist positive constants $c(\kappa)$ and $h_0$ such that
    \begin{align*}
\|\bar u_h-\bar u\|_{L^1(\Omega_h)}+\|y_h(\bar u_h)-\bar y\|_{L^2(\Omega)}+\|\varphi_h(\bar u_h)-\bar \varphi\|_{L^\infty(\Omega)}\leq c(\kappa)(h\vert\log h\vert)^{2\gamma}\ \ \forall h\leq h_0.
\end{align*}
\end{Theorem}
\begin{proof}
  The idea is to construct a perturbed optimal control problem that relates the continuous problem with the discrete. This is done by considering a certain affine perturbation of the control, similar to the discussion in \cite[p. 4]{ktv2013}. Given a minimizer $\bar u_h$ of the discrete problem \eqref{discprob}, let $\zeta:=(0,0,\rho)$, with $\rho:=\varphi_h(\bar u_h)-\varphi_{\bar u_h}$. Then, we define the perturbed optimal control problem
    \begin{equation*}
 		  \min_{u \in \Uad}\bigg\{ J_\zeta(u) := \int_\Omega L(x,y(x),u(x))\dx+\int_\Omega (\varphi_h(\bar u_h)-\varphi_{\bar u_h})u\dx \bigg\},
	\end{equation*}
subject to \eqref{see1}.
It is easy to see that $J_\zeta'(\bar u_h)(\bar u-\bar u_h)=\int_\Omega \varphi_h(\bar u_h)(\bar u-\bar u_h)\dx\geq 0$.
But that is all we need of $\bar u_h$ to apply the solution stability at $\bar u$. That is, we obtain $\|\bar u_h-\bar u\|_{L^1(\Omega_h)}\leq \kappa\|(\varphi(\bar u_h)-\varphi_{\bar u_h}) \|_{L^\infty(\Omega)}^\gamma. $ By Theorem \ref{estlincont}, Lemma \ref{L2.2}, Lemma \ref{discest} and Theorem \ref{varthmest} the claim follows.
\end{proof}

\subsection*{Acknowledgement}
The author is thankful to E. Casas, A. Dom\'{i}nguez Corella, V. M. Veliov, anonymous readers, and the anonymous referees for many helpful comments and suggestions that improved the quality of the manuscript.

\bibliographystyle{plain}
\bibliography{bibe}
\end{document}